\numberwithin{equation}{section}
\theoremstyle{definition}
\newtheorem{defn}{Definition}[section]
\theoremstyle{plain}
\newtheorem{thm}{Theorem}[section]
\newtheorem{lem}[thm]{Lemma}
\newtheorem{prop}[thm]{Proposition}
\newenvironment{prf}{\begin{proof}[\textbf{Proof}]}{\end{proof}}
\newcommand{\img}{\mathbf{i}}
\newcommand{\Un}[1]{\mathbf{1}_{#1}}
\newcommand{\set}[1]{\left\{#1\right\}}
\newcommand{\norm}[1]{\left\Vert#1\right\Vert}
\newcommand{\cA}{{\mathcal{A}}}
\newcommand{\cB}{{\mathcal{B}}}
\newcommand{\cBp}{{\widehat{\cB}^+}}
\newcommand{\cBd}{{\widehat{\cB}}}
\newcommand{\cD}{{\mathcal{D}}}
\newcommand{\cH}{{\mathcal{H}}}
\newcommand{\cI}{{\mathcal{I}}}
\newcommand{\cK}{{\mathcal{K}}}
\newcommand{\cO}{{\mathcal{O}}}
\newcommand{\cU}{{\mathcal{U}}}
\newcommand{\dC}{{\mathbb{C}}}
\newcommand{\dN}{{\mathbb{N}}}
\newcommand{\dR}{{\mathbb{R}}}
\newcommand{\dZ}{{\mathbb{Z}}}
\newcommand{\gA}{\mathfrak{A}}
\newcommand{\gB}{\mathfrak{B}}
\newcommand{\gee}{\mathbf{e}}
\newcommand{\gog}{\mathfrak{g}}
\newcommand{\Ind}{{\mathrm{Ind}}}
\newcommand{\Res}{{\mathrm{Res}}}
\newcommand{\Orb}{{\mathrm{Orb}}}
\newcommand{\St}{{\mathrm{St}}}
\newcommand{\Id}{{\mathrm{Id}}}
\newcommand{\Rep}{\mathrm{Rep}}
\newcommand{\supp}{\mathrm{supp}}
\newcommand{\abs}[1]{\left\lvert#1\right\rvert}
\title{Induced $*$-representations and $C^*$-envelopes of some quantum $*$-algebras.}
\begin{document}

\author{Philip A. Dowerk}
\address{MPI Leipzig, Germany}
\email{dowerk@mis.mpg.de, p.dowerk@gmx.de}
\thanks{The first author was supported by the International Max Planck Research School for Mathematics in the Sciences (Leipzig)}

\author{Yurii Savchuk}
\address{Universit\"at Leipzig, Mathematisches Institut Johannisgasse 26, 04103 Leipzig, Germany}
\email{savchuk@math.uni-leipzig.de}

\subjclass[2000]{Primary 20G42, 47L60, 17B37; Secondary 16G99, 22D30, 16W50, 47L65}

\date{\today.}

\keywords{Induced representations, group graded algebras, well-behaved representations, partial action of a group, Mackey analysis, $C^*$-envelope, $q$-deformed enveloping algebra, Podles' sphere, $q$-oscillator}

\begin{abstract}
We consider three quantum algebras: the $q$-oscillator algebra, the Podles' sphere and the $q$-deformed enveloping algebra of $su(2).$ To each of these $*$-algebras we associate certain partial dynamical system and perform the ``Mackey analysis'' of $*$-representations developed in \cite{SS}. As a result we get the description of ``standard'' irreducible $*$-representations. Further, for each of these examples we show the existence of a ``$C^*$-envelope'' which is canonically isomorphic to the covariance $C^*$-algebra of the partial dynamical system. Finally, for the $q$-oscillator algebra and the $q$-deformed $\cU(su(2))$ we show the existence of ``bad'' representations.
\end{abstract}

\maketitle

\section*{Introduction and preliminaries}\label{introduction}

The aim of this paper is to demonstrate a unified approach to the $*$-representation theory of various quantum algebras based on the techniques developed in \cite{SS}. Most of the quantum $*$-algebras (e.g. non-compact quantum groups) possess unbounded $*$-representations. The main problem in the theory of unbounded $*$-representations is to define and classify the ``well-behaved'' $*$-representations of a given $*$-algebra. We recall two classical examples. 

\noindent \textbf{Example.} Let $\gog$ be a finite-dimensional real Lie algebra, $G$ be the corresponding simply connected Lie group and $\cU_\dC(\gog)$ be the complex enveloping $*$-algebra of $\gog.$ A $*$-representation $\pi$ of $\gog$ is called \textit{integrable} if $\pi=dU$ for some unitary representation $U$ of $G.$ If $G\neq \dR$ there exists a $*$-representation of $\gog$ which is not integrable and, moreover, cannot be extended to an integrable representation even in a larger Hilbert space, see \cite{s3}. Already in the case $G=\dR^2,\ \gog=\dC[x_1,x_2]$ the category of all $*$-representations of $\gog$ is in a certain sense ``very large'' as shown in \cite[Section 9]{s}. 

\noindent \textbf{Example.} Let $W_n$ be the $n$-dimensional Weyl algebra. That is, $W_n$ is a complex $*$-algebra generated by self-adjoint elements $p_i,q_i,\ i=1,\dots, n,$ satisfying $[p_i,q_j]=-\delta_{ij}\img,\ [p_i,p_j]=[q_i,q_j]=0.$ A $*$-representation $\pi$ of $W_n$ is called integrable if $P_i=\overline{\pi(p_i)},\ Q_j=\overline{\pi(q_j)},\ i,j=1,\dots, n$ are self-adjoint and the one-parameter unitary groups $e^{\img t_iP_i},\ e^{\img s_jQ_j}$ satisfy the Weyl commutation relations. Already for $W_1$ one can show the existence of ``bad representations'' and show that the category of all $*$-representations is again ``very large'', whereas the only integrable $*$-representations are sums of copies of the Schr\"odinger representation.

We investigate the following three $*$-algebras in details: the $q$-oscillator algebra $\cA_q$ for $q>0,$ the $q$-deformed enveloping algebra $\cU_q(su(2)),\ q>0$ and the Podles' spheres $\cO(S_{qr}^2),\ q\in(0,1),\ r\in(0,\infty).$ The algebras $\cA_q$ and $\cU_q(su(2))$ are deformations of $W_1$ and $\cU_\dC(su(2))$ respectively, however, for both these algebras the notion of ``integrability'' cannot be generalized in a direct way. Instead of this we use the approach from \cite{SS}, which applies to all three algebras $\cA_q,\ \cO(S_{qr}^2),\ \cU(su(2))$ as well as to their classical analogues. Let $\cA$ denote one of these algebras. The basic idea is to find a natural $\dZ$-grading $\cA_k,\ k\in\dZ,$ for $\cA$ such that $\cA_0=:\cB$ is commutative. Further, we define the ``positive'' spectrum $\cBp$ of $\cB$ as the set of those characters $\chi\in\cBd$ which satisfy $\chi(a^*a)\geq 0$ for all $a\in\cA,$ such that $a^*a\in\cB.$ The group grading of $\cA$ defines a structure of a $*$-algebraic bundle in the sense of \cite{fd}, and there is a canonical partial action $\alpha$ of $\dZ$ on $\cBp.$ By means of the partial dynamical system $(\cBp,\dZ,\alpha)$ we
\begin{itemize}
 \item[--] define well-behaved $*$-representations,
 \item[--] show that the irreducible ones naturally correspond to the orbits of $(\cBp,\dZ,\alpha),$
 \item[--] construct the dual partial action $\beta$ on $C_0(\cBp)$ and the partial crossed product $C^*$-algebra $C_0(\cBp)\times_\beta\dZ$ in the sense of \cite{Ex}; using the Woronowicz's theory of affiliated operators, we establish a Morita equivalence between $C_0(\cBp)\times_\beta\dZ$ and $\cA.$
\end{itemize}
It turns out that every irreducible well-behaved representation of $\cA$ is induced from a one-dimensional representation. Thereby, the induction procedure is the generalized Rieffel induction introduced and studied in \cite{SS}. This result can be viewed as an analogue of the following theorem by Kirillov (see \cite{Kir}): Every  irreducible unitary representation of a nilpotent Lie group is induced from a one-dimensional representation of a certain subgroup.

In each of three cases the constructed crossed product $C^*$-algebra is of a special kind. Namely, the partial action of $\dZ$ on $C_0(\cBp)$ is generated by a single partial automorphism $\Theta,$ see \cite{Ex,McC} and Section \ref{subsec_part_act}. In this case the partial crossed product $C^*$-algebra coincides with the covariance $C^*$-algebra of the partial automorphism in the sense of \cite[Definition 3.7]{Ex}. In the case $\cA=\cO(S_{qr}^2)$ all $*$-representations are bounded, hence well-behaved, and the crossed product $C^*$-algebra $C_0(\cBp)\times_\alpha\dZ$ is isomorphic to the enveloping $C^*$-algebra of $C_{env}^*(\cA).$ 

Finally, for $\cA_q$ and for $\cU_q(su(2))$ we show the existence of ``bad'' representations. More precisely, we prove the existence of a $*$-representation which is not well-behaved and cannot be extended to a well-behaved $*$-representation even in a larger Hilbert space. It generalizes the well-known results for $W_1$ and $\cU(su(2)).$

Among the examples which can be analyzed in the same spirit include various bounded and unbounded $*$-algebras: quantum group algebras $SU_q(2),\ SU_q(1,1),$ $q$-deformed $\cU(su(1,1)),$ different deformations of CAR and CCR, AF pre-$C^*$-algebras (see \cite{Ex1}) etc.

\subsection{$*$-Algebras and $*$-representations} By a $*$-\textit{algebra} we mean a complex associative algebra $\cA$ equipped with a mapping $a\mapsto a^*$ of $\cA$ into itself, called the \textit{involution} of $\cA$, such that $(\lambda a+\mu b)^* = \bar{\lambda}a^*+ \bar{\mu} b^*, (ab)^* = b^* a^*$ and $(a^*)^*=a$ for $a,b\in \cA$ and $\lambda, \mu\in \dC$. The unit of $\cA$ (if it exists) will be denoted by $\Un{\cA}$ or simply by $\Un{}.$ For every $*$-algebra $\cA$ denote by $\sum\cA^2$ the set of finite sums $\sum a_i^*a_i^{},\ a_i\in\cA.$

Throughout this paper we use some terminology and results from unbounded representation theory in Hilbert space (see e.g. \cite{s}). We repeat some basic notions and facts. If $T$ is a Hilbert space operator, $\cD(T),\ \overline{T}$ and $T^*$ denote its domain, its closure and its adjoint, respectively. Let $\cD$ be a dense linear subspace of a Hilbert space $\cH$ with scalar product $\langle\cdot,\cdot\rangle.$ A $*$-\textit{representation} of a $*$-algebra $\cA$ on $\cD$ is an algebra homomorphism $\pi$ of $\cA$ into the algebra $L(\cD)$ of linear operators on $\cD$ such that $\langle\pi(a)\varphi,\psi\rangle=\langle\varphi,\pi(a^*)\psi\rangle$ for all $\varphi,\psi\in\cD$ and $a\in \cA$. We call $\cD(\pi):=\cD$ the \textit{domain} of $\pi$ and write $\cH_\pi:=\cH$. Two $*$-representations $\pi_1$ and $\pi_2$ of $\cA$ are \textit{(unitarily) equivalent} if there exists an isometric linear mapping $U$ of $\cD(\pi_1)$ onto $\cD(\pi_2)$ such that $\pi_2(a)=U\pi_1(a)U^{-1}$ for $a\in \cA$. The \textit{direct sum representation} $\pi_1\oplus\pi_2$ acts on the domain $\cD(\pi_1)\oplus \cD(\pi_2)$ by $(\pi_1\oplus \pi_2)(a)=\pi_1(a)\oplus \pi_2(a)$, $a\in\cA$. A $*$-representation $\pi$ is called \textit{irreducible} if a direct sum decomposition $\pi=\pi_1\oplus\pi_2$ is only possible when $\cD(\pi_1)=\{0\}$ or $\cD(\pi_2)=\{0\}$. For a $*$-subalgebra $\cB\subseteq\cA$ we denote by $\Res_\cB\pi$ its restriction to $\cB.$ The \textit{graph topology} of $\pi$ is the locally convex topology on the vector space $\cD(\pi)$ defined by the norms $\varphi\mapsto\norm{\varphi}+\norm{\pi(a)\varphi},$ where $a\in\cA$. If $\cD(\overline{\pi})$ denotes the completion the $\cD(\pi)$ in the graph topology of $\pi,$ then $\overline{\pi}(a):=\overline{\pi(a)}\upharpoonright \cD(\overline{\pi}),$ $a\in\cA,$ defines a $*$-representation of $\cA$ with domain $\cD(\overline{\pi}),$ called the \textit{closure} of $\pi.$ In particular, $\pi$ is \textit{closed} if and only if $\cD(\pi)$ is complete in the graph topology of $\pi.$ A $*$-representation $\pi$ is called \textit{non-degenerate} if $\pi(\cA)\cD(\pi):= {\rm Lin}~\{\pi(a)\varphi; a \in \cA, \varphi \in \cD(\pi)\}$ is dense in $\cD(\pi)$ in the graph topology of $\pi$. If $\cA$ is unital and $\pi$ is non-degenerate, then we have $\pi(\Un{\cA})\varphi=\varphi$ for all $\varphi \in \cD(\pi)$. We say that $\pi$ is \textit{cyclic} if there exists a vector $\varphi\in \cD(\pi)$ such that $\pi(\cA)\varphi$ is dense in $\cD(\pi)$ in the graph topology of $\pi$. For a $C^*$-algebra $\gA$ and Hilbert space $\cH,$ denote by $\Rep(\gA,\cH)$ the category of non-degenerate $*$-representations of $\gA$ on $\cH.$ By $\Rep\gA$ denote the category of all non-degenerate $*$-representations of $\gA.$

We recall the induction procedure for $*$-representations of general $*$-algebras developed in \cite[Section 2]{SS} in a slightly more general context. However, we will not perform this procedure but use Proposition \ref{orthonormal base of representation space} to get the explicit formulas. Let $\cB\subseteq\cA$ be $*$-algebras. A linear map $p:\cA\to\cB$ is called a \textit{bimodule projection} if $p(a^*)=p(a)^*,\ p(b_1ab_2)=b_1p(a)b_2,\ p(\Un{\cA})=\Un{\cB},$ for all $a\in\cA,\ b_1,b_2\in\cB.$ Let $\rho$ be a $*$-representation of $\cB.$ Denote by $\cA\otimes_\cB\cD(\rho)$ the quotient of $\cA\otimes_\dC\cB$ by the linear span of vectors $ab\otimes\varphi-a\otimes\rho(b)\varphi,a\in\cA,b\in\cB,\varphi\in\cD(\rho).$ We say that $\rho$ is \textit{ inducible} from $\cB$ to $\cA$ via $p$ if the sesquilinear form
\begin{gather}\label{eq_sesq}
\langle\sum_k x_k\otimes\varphi_k,\sum_l y_l\otimes\psi_l\rangle_0:=\sum_{k,l}\langle\rho(p(y_l^*x_k^{}))\varphi_k,\psi_l\rangle,
\end{gather}
is positive semi-definite on $\cA\otimes_\cB\cD(\rho).$ Denote by $\cK_\rho$ the kernel of $\langle\cdot,\cdot\rangle_0.$ Then $\cD_0=\cA\otimes_\cB\cD(\rho)/\cK_\rho$ is an inner-product space. Define a $*$-representation $\pi$ on $\cD_0$ via
$$
\pi(a)(\sum_i[a_i\otimes\varphi_i]):=\sum_i[aa_i\otimes\varphi_i],
$$
where $\sum_i[a_i\otimes\varphi_i]\in\cD_0$ denotes the image of $\sum_i a_i\otimes\varphi_i$ under the quotient mapping. Finally define $\Ind\rho$ to be the closure of $\pi.$

Our major application of the induction procedure will be in the following context. Let $G$ be a discrete group and $\cA$ be a \textit{$G$-graded $*$-algebra}. That is, $\cA$ is a direct sum of vector spaces $\cA_g,\ g \in G$, such that
\begin{gather}\label{eq_Ggrad}
\cA_g\cdot\cA_h\subseteq \cA_{g\cdot h}\ \mbox{and}\
(\cA_g)^*\subseteq\cA_{g^{-1}}\ \mbox{for}\ g,h\in G.
\end{gather}
The elements of $\cup_{g\in G}\cA_g$ are called \textit{homogeneous}. For every subgroup $H\subseteq G$ the sum $\oplus_{g\in H}\cA_g=:\cA_H$ is a $*$-subalgebra of $\cA$ and the canonical projection $p:\cA\to\cA_H$ is a bimodule projection. If $\cA_e$ is commutative then a character $\chi:\cA_e\to\dC$ is inducible (via $p_e:\cA\to\cA_e$) if and only if $\chi(a^*a)\geq 0$ for all homogeneous $a\in\cA.$

\subsection{Partial actions and partial crossed products}\label{subsec_part_act} The constructions and results of this subsection are taken from \cite{Ex,McC}. A \textit{partial action} of a discrete group $G$ on a set $X$ is a pair
$$\alpha=(\set{\cD_g}_{g\in G},\set{\alpha_g}_{g\in G}),$$
where $\cD_g\subseteq X,\ g\in G$ are subsets and $\alpha_g:\cD_{g^{-1}}\to \cD_g$ are bijections such that
\begin{enumerate}
  \item[(i)] $\alpha_g(\cD_{g^{-1}}\cap\cD_h)=\cD_{gh}\cap\cD_g,\ g,h\in G,$
  \item[(ii)] $\alpha_{hg}(x)=\alpha_h(\alpha_g(x)),\ x\in\cD_{g^{-1}}\cap\cD_{g^{-1}h^{-1}},$
  \item[(iii)] $\cD_e=X,\ \alpha_e=\Id_X.$
\end{enumerate}

For a partial action $\alpha=(\set{\cD_g}_{g\in G},\set{\alpha_g}_{g\in G})$ on a topological space $X$ we require in addition that $\cD_g$ are open sets and $\alpha_g:\cD_{g^{-1}}\to\cD_g,\ g\in G$ are homeomorphisms. We call $(X,G,\alpha)$ a \textit{partial dynamical system (p.d.s.)}. 

For a partial action $\beta=(\set{I_g}_{g\in G},\set{\beta_g}_{g\in G})$ of $G$ on a $C^*$-algebra $\gB$ we require in addition that $I_g,\ g\in G$ are closed two-sided ideals and $\beta_g:I_{g^{-1}}\to I_g$ are $*$-isomorphisms. We call $(\gB,G,\beta)$ a \textit{partial $C^*$-dynamical system ($C^*$-p.d.s.)}. For a p.d.s. $(X,G,\alpha)$ where $X$ is a locally compact Hausdorff space we define the \textit{dual} $C^*$-p.d.s. as follows. Put $\gB=C_0(X),\ I_g=C_0(\cD_g)$ and define $\beta_g:I_{g^{-1}}\to I_g$ by
$$
(\beta_g(f))(x)=f(\alpha_{g^{-1}}(x)),\ x\in\cD_{g}, f\in I_{g^{-1}},\ g\in G.
$$
Direct computations show that $\beta=(\set{I_g}_{g\in G},\set{\beta_g}_{g\in G})$ is a partial action on $\gB$ and that $(\gB,G,\beta)$ is a $C^*$-p.d.s.

Let $(\gB,G,\beta),\ \beta=(\set{I_g}_{g\in G},\set{\beta_g}_{g\in G})$ be a $C^*$-p.d.s. The \textit{partial crossed product $C^*$-algebra} $\gA=\gB\times_\beta G$ is the enveloping $C^*$-algebra of the $*$-algebra $\gB G$ defined as follows. $\gB G\subseteq\gB\otimes\dC[G]$ is the linear span of the set $\set{a\otimes g\mid a\in\cI_g},$ with multiplication and involution defined by
$$
(a\otimes g)(b\otimes h):=\alpha_{g}(\alpha_{g^{-1}}(a)b)\otimes gh,\ (a\otimes g)^*:=\alpha_{g^{-1}}(a^*)\otimes g^{-1}.
$$

The examples of $C^*$-p.d.s. which appear below are of a special kind. Recall \cite{Ex}, that a \textit{partial automorphism} of a $C^*$-algebra $\gB$ is a triple $\Theta=(\theta,I,J),$ where $I,J\subseteq\gB$ are closed two-sided ideals and $\theta:I\to J,$ is a $*$-isomorphism. Set $I_0=\gB$ and define $I_n,\ n\in\dZ,$ by induction
\begin{gather*}
I_{n+1}=\set{a\in J\mid \theta^{-1}(a)\in I_n},\ \mbox{for}\ n\geq 0,\\
I_{n-1}=\set{a\in I\mid\theta(a)\in I_n},\ \mbox{for}\ n\leq 0.
\end{gather*}
In particular, $I=I_{-1}$ and $J=I_1.$ It can be checked, see \cite[Section 3]{Ex}, that the triple $(\gB,\dZ,\beta),$ where $\beta=(\set{I_n}_{n\in\dZ},\set{\theta^n}_{n\in\dZ})$ is a $C^*$-p.d.s. The partial crossed product algebra $\gB\times_\beta\dZ$ is called the \textit{covariance algebra of} $(\gB,\Theta)$ and is denoted by $C^*(\gB,\Theta).$ As in the case of a crossed-product by a $*$-automorphism, $*$-representations of $C^*(\gB,\Theta)$ are in one-to-one correspondence with covariant representations of the pair $(\gB,\Theta),$ see \cite[Section 5]{Ex}. In case of the $C^*$-p.d.s. defined by $(\gB,\Theta)$ a \textit{covariant representation} $\pi\times u$ consists of a $*$-representation $\pi:\gB\to B(\cH)$ and a partial isometry $u,$ whose initial and final spaces are $\overline{\pi(I)\cH}$ and $\overline{\pi(J)\cH}$ respectively, so that
$$
\pi(\theta(b))=u\pi(b)u^*,\ \mbox{holds for every}\ b\in I.
$$
If the latter is satisfied, then $\pi\times u$ becomes a $*$-representation of $\gB\dZ,$ hence of $C^*(\gB,\dZ),$ via
$$
(\pi\times u)(f\otimes k)=\pi(f)u^k,\ \mbox{for}\ f\otimes k\in\gB\dZ,
$$
where $u^{-k}=u^{*k}$ for $k\in\dN.$

\subsection{Unbounded elements affiliated with $C^*$-algebras and $C^*$-envelopes} The theory of unbounded elements affiliated with a $C^*$-algebra was developed in \cite{wor1}, see also \cite{Lan}. Let $\gA$ be a $C^*$-algebra and let $T$ be a densely defined closed linear operator on $\gA.$ Denote by $D(T)\subseteq\gA$ its domain\footnote{Recall that $\cD(\cdot)$ is domain of a Hilbert space operator.}. The adjoint operator $T^*$ is defined as follows. For $y,z\in\gA$ write $y\in D(T^*),\ T^*y=z$ if $\langle Tx,y\rangle=\langle x,z\rangle$ holds for all $x\in D(T).$ Following \cite{Lan} we say that $T$ is \textit{affiliated\footnote{In \cite{Lan} the term \textit{regular operator} on $\gA$ is used.} with $\gA$} and write $T\,\eta\,\gA,$ if $D(T^*)$ and the range of $1+T^*T$ are dense in $\gA$, see \cite[Chapter 9]{Lan}. 

Every non-degenerate $*$-representation of a $C^*$-algebra $\gA$ can be continued to the set $\gA^\eta$ of all operators affiliated with $\gA.$ Namely, for every $\pi\in\Rep(\gA,\cH)$ and $T\,\eta\,\gA,$ there exists a closed operator $\pi(T)\,\eta\,\pi(\gA)$ with a core $\pi(D(T))\cH$ such that 
$$
\pi(T)(\pi(a)\varphi)=\pi(Ta)\varphi,\ \mbox{for all}\ \varphi\in\cH,\ a\in D(T).
$$
Moreover, if $D_0\subseteq D(T)$ is a core of $T,$ then $\pi(D_0)\cH$ is a core of $\pi(T).$

\begin{defn}\label{defn_envelope}
Let $\cA$ be a $*$-algebra with a given category of $*$-representations $\Rep\cA$ and fixed generators $a_1,\dots,a_n.$ We will say that a $C^*$-algebra $\gA$ is a \textit{$C^*$-envelope} of $\cA$ if there exist affiliated elements $A_1,\dots,A_n\,\eta\,\gA$ such that 
\begin{gather}\label{eq_correspondence_A_A}
\pi(A_i)=\overline{\rho(a_i)},\ i=1,\dots,n.
\end{gather}
defines an equivalence functor $\rho\mapsto\pi$ between $\Rep\cA$ and $\Rep\gA.$
\end{defn}
\noindent \textit{Remarks} 1. If every $*$-representation of $\cA$ is bounded, then there exists the enveloping $C^*$-algebra $C_{env}^*(\cA),$ which is obviously a $C^*$-envelope of $\cA.$ 

\noindent 2. In the last definition, the isomorphism class of $\gA$ depends a priori on the choice of the generators $a_i$ and of the category $\Rep\cA.$ However, we cannot provide any example, where $\gA$ would depend on the generators $a_i.$

\section{The orbit method}

In this section we recall the orbit method developed in \cite{SS}. Throughout the section $G$ is a countable discrete group and $\cA$ is a \textit{$G$-graded $*$-algebra}. We assume that the $*$-subalgebra $\cB:=\cA_e$ is commutative and denote by $\cBd$ the set of all characters of $\mathcal{B}$ (i.e. nontrivial $\ast$-homomorphisms $\chi:\mathcal{B}\rightarrow\mathbb{C}$). Further, we define the ''positive'' spectrum $\widehat{\mathcal{B}}^+\subseteq\cBd$ to be the set of all characters $\chi\in\cBd$ which satisfy\footnote{The theory developed in \cite{SS} requires the additional condition $\chi(c^{\ast}d)\chi(d^{\ast}c)=\chi(c^{\ast}c)\chi(d^{\ast}d)\textnormal{ for all }\chi\in\cBp,g\in G,c,d\in\mathcal{A}_g,$ which holds automatically. It can be checked using the equation $(c^*cd^*d)^2=(c^*cd^*d)(c^*dd^*c)$ which follows by commutativity of $\cB.$}
\begin{gather}
\chi(a^*a)\geq 0\ \mbox{for all homogeneous elements}\ a\in\cA. 
\end{gather}
\begin{lem}\label{lem_poschar}
Assume that for every $g\in G$ there exists an element $a_g\in\mathcal{A}_g$ such that $\mathcal{A}_g=a_g\mathcal{B}$. Then $\chi\in\cBd$ belongs to $\cBp$ if and only if $\chi(a_g^*a_g^{})\geq 0$ for all $g\in G$.
\end{lem}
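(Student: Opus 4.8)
The plan is to prove the two implications separately; the forward implication is immediate and the content lies entirely in the converse. If $\chi\in\cBp$, then since each $a_g$ is a homogeneous element of $\cA$, the defining inequality of $\cBp$ applied to $a=a_g$ gives $\chi(a_g^*a_g^{})\geq 0$ for every $g\in G$ at once.

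For the converse, I would suppose $\chi\in\cBd$ satisfies $\chi(a_g^*a_g^{})\geq 0$ for all $g\in G$, and let $a\in\cA$ be an arbitrary homogeneous element, say $a\in\cA_g$. By hypothesis $\cA_g=a_g\cB$, so I may write $a=a_gb$ for some $b\in\cB$. Then $a^*a=b^*a_g^*a_g^{}b$, and using the grading rules \eqref{eq_Ggrad} one sees that $a_g^*\in\cA_{g^{-1}}$, whence $a_g^*a_g^{}\in\cA_{g^{-1}}\cA_g\subseteq\cA_e=\cB$; since also $b,b^*\in\cB$, the whole product $a^*a=b^*(a_g^*a_g^{})b$ lies in the commutative subalgebra $\cB$. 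Applying the character $\chi$ and using that $\chi$ is multiplicative and $*$-preserving on $\cB$, I obtain
\begin{gather*}
\chi(a^*a)=\chi(b^*)\,\chi(a_g^*a_g^{})\,\chi(b)=\abs{\chi(b)}^2\,\chi(a_g^*a_g^{})\geq 0,
\end{gather*}
which is exactly what is required to conclude $\chi\in\cBp$.

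There is no genuine obstacle here: the only point demanding care is the bookkeeping with the grading, namely verifying that $a_g^*a_g^{}$, and therefore $a^*a$, land back in the commutative subalgebra $\cB$, so that multiplicativity of $\chi$ can be invoked to split $\chi(a^*a)$ into the manifestly nonnegative factor $\abs{\chi(b)}^2$ times $\chi(a_g^*a_g^{})$. The cyclicity assumption $\cA_g=a_g\cB$ is precisely what reduces the infinitely many inequalities defining $\cBp$ (one for each homogeneous $a$) to the single family indexed by the chosen generators $a_g$.
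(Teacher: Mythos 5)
Your proof is correct and follows essentially the same route as the paper's: the forward direction is immediate, and for the converse you write $a=a_gb$ with $b\in\cB$ and use multiplicativity of $\chi$ on $\cB$ to get $\chi(a^*a)=\chi(a_g^*a_g^{})\abs{\chi(b)}^2\geq 0$. The extra bookkeeping you include (checking that $a_g^*a_g^{}$ and hence $a^*a$ land in $\cB$) is left implicit in the paper but is the same argument.
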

\begin{prf}The ``only if'' part is clear. Assume that $\chi(a_g^*a_g^{})\geq 0$ for all $g\in G$. By assumption, if $c_g\in\mathcal{A}_g$, then $c_g=a_gb$ for some $b\in\mathcal{B}$. Hence 
	\begin{gather*}
		\chi(c_g^*c_g)=\chi(b^*a_g^*a_g^{}b)=\chi(a_g^*a_g^{})\chi(b^*b)\geq 0.
	\end{gather*}
\end{prf}

The set $\cBp$ consists of those characters which satisfy \eqref{eq_sesq}, i.e. are inducible from $\cB$ to $\cA$ via $p_{e}.$ 

\begin{defn}
For $g\in G$ define\footnote{In \cite{SS} the notation $\alpha_g:\cD_g^{}\to\cD_{g^{-1}}$ was used.}
\begin{gather}\label{eq_defn_cDg}
\cD_{g^{-1}}=\set{\chi\in\cBp\mid \chi(a_g^*a_g^{})\neq 0\ \mbox{for some}\ a_g\in\cA_g}.
\end{gather}
If $\chi\in\cD_{g^{-1}}$ and $\chi(a_g^*a_g^{})\neq 0$ we set
\begin{gather}\label{character defined}
	(\alpha_g(\chi))(b):=\frac{\chi(a_g^{\ast}ba_g)}{\chi(a_g^{\ast}a_g)} \mbox{ for }b\in\mathcal{B}.
\end{gather}
\end{defn}

Direct computations (see \cite[Proposition 13]{SS}) show that $\alpha=(\set{\alpha_g}_{g\in G},\set{\cD_g}_{g\in G})$ is a well-defined partial action of $G$ on $\cBp.$ We will often write $\chi^g$ instead of $\alpha_g(\chi).$ For a character $\chi\in\cBp$ we denote by $\Orb\chi\subseteq\cBp$ its orbit under the partial action of $G.$

\begin{prop}[see Proposition 16 in \cite{SS}]\label{orthonormal base of representation space}
Let $\chi\in\widehat{\mathcal{B}}^+$ and $\pi=\Ind\chi$ be the induced $*$-representation. For every $g\in G$ such that $\chi\in\cD_{g^{-1}}$ fix an element $a_g\in\cA_g$ such that $\chi(a_g^*a_g^{})\neq 0.$ Then there exists an orthonormal base $\set{e_g\mid\chi\in\cD_{g^{-1}}}$ in $\cD(\pi)$ such that for $h\in G$ and $b_h\in\mathcal{A}_h$ we have
\begin{align*}
\pi(b_h)e_g=\frac{\chi(a_{hg}^{\ast}b_ha_g)}{\chi(a_{hg}^{\ast}a_{hg}^{})^{1/2}\chi(a_g^{\ast}a_g^{})^{1/2}}e_{hg}, \mbox{ if } \chi\in\cD_{g^{-1}h^{-1}}
\end{align*}
and $\pi(b_h)e_g=0$ otherwise. In particular, if $b\in\mathcal{B}$, we have $\pi(b)e_g=\chi^g(b)e_g.$
\end{prop}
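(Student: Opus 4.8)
The plan is to turn the abstract induction of Section 1 into an explicit weighted-shift picture, exploiting the rank-one freeness $\cA_g=a_g\cB$ of the homogeneous components (the standing hypothesis of Lemma \ref{lem_poschar}). Since $\chi$ is a character, its representation space is $\dC$, so every elementary tensor reduces: for $c_g=a_gb\in\cA_g$ one has $a_gb\otimes 1=a_g\otimes\chi(b)$. Hence $\cA\otimes_\cB\dC$ is spanned by $\set{a_g\otimes 1\mid g\in G}$, and $\cD_0=(\cA\otimes_\cB\dC)/\cK_\chi$ is spanned by the classes $[a_g\otimes 1]$.

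The first genuine step is to compute the form \eqref{eq_sesq}, which here reads $\langle a_g\otimes 1,a_h\otimes 1\rangle_0=\chi(p_e(a_h^*a_g^{}))$. The grading \eqref{eq_Ggrad} places $a_h^*a_g^{}$ in $\cA_{h^{-1}g}$, so $p_e$ kills it unless $h=g$; thus the family $\set{a_g\otimes 1}$ is orthogonal with $\norm{a_g\otimes 1}_0^2=\chi(a_g^*a_g^{})$. By definition \eqref{eq_defn_cDg} together with $\cA_g=a_g\cB$ (the computation in the proof of Lemma \ref{lem_poschar}), $\chi(a_g^*a_g^{})\neq 0$ exactly when $\chi\in\cD_{g^{-1}}$. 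For $\chi\notin\cD_{g^{-1}}$ the vector has zero length, so Cauchy--Schwarz for the positive semidefinite form forces $a_g\otimes 1\in\cK_\chi$; for $\chi\in\cD_{g^{-1}}$ I would put $e_g:=\chi(a_g^*a_g^{})^{-1/2}[a_g\otimes 1]$. Orthogonality and normalization are then immediate, and since the $[a_g\otimes 1]$ span $\cD_0$ while the remaining ones vanish, $\set{e_g\mid\chi\in\cD_{g^{-1}}}$ is the claimed orthonormal basis.

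Next I would read off the action. By the defining formula for $\pi$, $\pi(b_h)e_g=\chi(a_g^*a_g^{})^{-1/2}[b_ha_g\otimes 1]$, and since $b_ha_g\in\cA_{hg}=a_{hg}\cB$ there is a unique $c\in\cB$ with $b_ha_g=a_{hg}c$. If $\chi\notin\cD_{g^{-1}h^{-1}}$ then $[a_{hg}\otimes 1]\in\cK_\chi$ and the image is $0$. Otherwise, multiplying $b_ha_g=a_{hg}c$ on the left by $a_{hg}^*$ and applying $\chi$ gives $\chi(c)=\chi(a_{hg}^*b_ha_g^{})/\chi(a_{hg}^*a_{hg}^{})$; inserting $[b_ha_g\otimes 1]=\chi(c)\,\chi(a_{hg}^*a_{hg}^{})^{1/2}e_{hg}$ reproduces the stated coefficient. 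Taking $b\in\cB$, i.e. $h=e$ and $hg=g$, collapses this to $\chi(a_g^*ba_g^{})/\chi(a_g^*a_g^{})=\chi^g(b)$ by \eqref{character defined}.

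There is no deep obstacle here: once the grading is used, the off-diagonal inner products vanish and the whole representation becomes a weighted shift along $\Orb\chi$. The only care needed is the bookkeeping around $\cK_\chi$ --- checking that the zero-length vectors genuinely drop out (this is exactly the Cauchy--Schwarz step) and that the basis depends only on the fixed choices of the $a_g$, as the proposition already signals by asking one to fix these elements in advance.
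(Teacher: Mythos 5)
The paper offers no proof of this proposition at all --- it is imported verbatim as Proposition 16 of \cite{SS} --- so your argument can only be measured against the construction of $\Ind$ recalled in the preliminaries, and against that it is essentially the right explicit computation. The orthogonality of $a_g\otimes 1$ and $a_h\otimes 1$ for $g\neq h$ via the bimodule projection $p_e$ and the grading, the identification of the null vectors through Cauchy--Schwarz for the semidefinite form, the normalization $e_g=\chi(a_g^*a_g)^{-1/2}[a_g\otimes 1]$, and the extraction of the coefficient of $e_{hg}$ in $\pi(b_h)e_g$ are all correct, and the specialization $b\in\cB$ does collapse to $\chi^g(b)$ by \eqref{character defined}. (You gloss over why the $e_g$ are \emph{total} in $\cH_\pi$, but that is immediate from the density of $\cD_0$.)

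The one genuine gap is the spanning step. You invoke $\cA_g=a_g\cB$ as a ``standing hypothesis'', but it is a hypothesis only of Lemma \ref{lem_poschar}, not of the proposition, which is stated for an arbitrary $G$-graded $*$-algebra with $\cA_e$ commutative; the proposition merely asks you to fix \emph{some} $a_g$ with $\chi(a_g^*a_g)\neq 0$, and in general $\cA_g$ need not be singly generated as a $\cB$-module. Without that assumption your reduction $a_gb\otimes 1=\chi(b)(a_g\otimes 1)$ does not exhaust $\cA_g\otimes 1$, and $b_ha_g$ need not factor as $a_{hg}c$. The correct replacement is the identity recorded in the footnote to the definition of $\cBp$, namely $\chi(c^{*}d)\chi(d^{*}c)=\chi(c^{*}c)\chi(d^{*}d)$ for $c,d\in\cA_g$, which says that $\langle\cdot,\cdot\rangle_0$ has rank at most one on each subspace $\cA_g\otimes 1$. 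Concretely, for any $c\in\cA_{hg}$ the vector
\begin{equation*}
c\otimes 1-\frac{\chi(a_{hg}^{*}c)}{\chi(a_{hg}^{*}a_{hg}^{})}\,a_{hg}\otimes 1
\end{equation*}
has zero length by that identity and hence lies in $\cK_\chi$; applied to $c=b_ha_g$ this yields exactly your coefficient without any factorization, and applied to all $g$ it shows that the classes $[a_g\otimes 1]$ still span $\cD_0$. With this substitution your proof is complete in the stated generality; as written it covers only the case $\cA_g=a_g\cB$, which, to be fair, is the case in all three examples the paper actually treats.
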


For an element $b\in\cB$ introduce its \textit{"Gel'fand transform"}
$$
\widehat{b}:\cBd\to\dC,\ \widehat{b}(\chi)=\chi(b),\ \chi\in\cBd.
$$
We equip $\cBd$ with the weak topology defined by $\set{\widehat{b}\mid b\in\cB}$ and the Borel structure generated by the open sets. By definition of $\cBp$ it is a closed subset of $\cBd.$ It can be checked, that the partial action of $G$ is topological. That is, $\cD_g,\ g\in G$ are open sets, and $\alpha_g:\cD_{g^{-1}}\to\cD_g$ are homeomorphisms. Since $G$ is countable and the one-point sets are closed, the $G$-orbits are Borel subsets of $\cBp.$

\begin{defn}\label{definition well-behaved representation}
	A closed $\ast$-representation $\pi$ of $\mathcal{A}$ is called \textit{well-behaved} if:
	\begin{itemize}
		\item [$(i)$] the restriction $\textnormal{Res}_{\mathcal{B}}\pi$ of $\pi$ to $\mathcal{B}$ is integrable and there exists a spectral measure $E_{\pi}$ on $\cBp$ such that
		\begin{align*}
			\overline{\pi(b)}=\int_{\widehat{\mathcal{B}}^+}{\widehat{b}(\chi)dE_{\pi}(\chi)} \textnormal{ for } b\in\mathcal{B}.
		\end{align*}
		\item [$(ii)$] For all $a_g\in\mathcal{A}_g,g \in G$, and all Borel subsets $\Delta\subseteq\widehat{\mathcal{B}}^+$, we have
		\begin{gather}\label{eq_defn_well_beh}
			\pi(a_g)E_{\pi}(\Delta)\supseteq E_{\pi}(\alpha_g(\Delta\cap\cD_{g^{-1}}))\pi(a_g).
		\end{gather}
	\end{itemize}
A well-behaved $*$-representation $\pi$ is \textit{associated with an orbit} $\Orb\chi$ if $E_{\pi}$ is supported on the set $\Orb\chi.$ Denote by $\Rep\cA$ the category of well-behaved $*$-representations.
\end{defn}

\medskip
By \cite[Proposition 17]{SS}, relation \eqref{eq_defn_well_beh} can be replaced with
\begin{gather}\label{eq_defn_well_beh_u}
u_g\int f(t)dE_\pi(t)\subseteq\int_{\cD_g}f(\alpha_{g^{-1}}(t))dE_\pi(t)\cdot u_g.
\end{gather}
where $u_g$ is the partial isometry in the polar decomposition $\overline{\pi(a_g)}=u_gc_g,$ and $f$ is any measurable function on $\cBp.$ If $f$ is bounded, then ``$\subseteq$'' becomes an equality.

In the next proposition we collect basic properties of well-behaved $*$-representations. For the proof see Propositions 18, 29 and Theorem 7 in \cite{SS}.
\begin{prop}\label{prop_properties}
	\begin{itemize}
		\item [(i)] Every bounded $*$-representation is well-behaved.
		\item [(ii)] If the partial action of $G$ on $\cBp$ posesses a measurable countably separated section, then every irreducible well-behaved $*$-representation is associated with an orbit.
		\item [(iii)] Condition $(i)$ in Definition \ref{definition well-behaved representation} holds automatically if $\cB$ is countably generated, and the restriction of $\pi$ on $\cB$ is integrable, that is $\overline{\pi(a)}$ is normal for all $b\in\cB.$
	\end{itemize}
	\label{prop_well}
\end{prop}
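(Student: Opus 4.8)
The plan is to handle the three items with the spectral and measure-theoretic machinery for representations of commutative $*$-algebras, concentrating the genuinely hard work in item $(ii)$. For $(i)$, observe that if $\pi$ is bounded then $\set{\pi(b)\mid b\in\cB}$ is a commuting family of bounded normal operators (commuting because $\cB$ is commutative, normal because $\pi$ is a bounded $*$-representation), so the joint spectral theorem yields a projection-valued measure on the Gelfand spectrum which I would transport to a spectral measure $E_\pi$ on $\cBd$ with $\overline{\pi(b)}=\int\widehat b\,dE_\pi$. To pin the support in $\cBp$, I would use that for each homogeneous $a$ positivity of $\pi(a^*a)=\pi(a)^*\pi(a)$ gives $\int\widehat{a^*a}\,dE_\pi\geq 0$; since $\widehat{a^*a}$ is a continuous real function, a support point $\chi_0$ with $\chi_0(a^*a)<0$ would, via a neighbourhood carrying a nonzero projection, make the integral negative, a contradiction. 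Hence $\supp E_\pi\subseteq\set{\widehat{a^*a}\geq 0}$ for every homogeneous $a$, so $\supp E_\pi\subseteq\cBp$, giving condition $(i)$ of Definition \ref{definition well-behaved representation}. For condition $(ii)$ I would pass to the polar decomposition $\overline{\pi(a_g)}=u_gc_g$ with $c_g=\pi(a_g^*a_g)^{1/2}=\int\widehat{a_g^*a_g}^{1/2}\,dE_\pi$, whose support projection is exactly $E_\pi(\cD_{g^{-1}})$, and exploit the homogeneity identity $\pi(a_g)^*\pi(b)\pi(a_g)=\pi(a_g^*ba_g)$ for $b\in\cB$ together with $\chi(a_g^*ba_g)=\chi(a_g^*a_g)(\alpha_g\chi)(b)$. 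Dividing out $c_g$ on $E_\pi(\cD_{g^{-1}})\cH$ would give $u_g^*\pi(b)u_g=\int_{\cD_{g^{-1}}}\widehat b(\alpha_g\chi)\,dE_\pi(\chi)$, and the change of variable $\psi=\alpha_g\chi$ then yields $u_g^*E_\pi(\Gamma)u_g=E_\pi(\alpha_{g^{-1}}(\Gamma\cap\cD_g))$; rearranging with $u_gu_g^*=E_\pi(\cD_g)$ produces exactly \eqref{eq_defn_well_beh_u} for indicator $f$, equivalently \eqref{eq_defn_well_beh}.

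For $(iii)$ the representation need not be bounded. Splitting each generator into self-adjoint real and imaginary parts leaves countably many self-adjoint generators $c_1,c_2,\dots$ of $\cB$, and integrability makes each $\overline{\pi(c_n)}$ self-adjoint. On the common core $\cD(\pi)$ the $\pi(c_n)$ commute, and the crux is to upgrade this to \emph{strong} commutativity of the closures; once that is in hand, the joint spectral theorem for strongly commuting self-adjoint operators furnishes a spectral measure that I would push forward through the generators to a spectral measure $E_\pi$ on $\cBd$, with support in $\cBp$ exactly as in $(i)$. Strong commutativity is the subtle point, as it does not follow from commutativity on a dense domain alone; here I would force it from the closedness of $\pi$ and countable generation via a commutation criterion for unbounded self-adjoint operators of Nelson/Nussbaum type, checking the criterion on a single dominating self-adjoint operator assembled from the $c_n$, or equivalently invoking the spectral theorem for integrable representations of commutative $*$-algebras from \cite{s}.

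For $(ii)$ let $\pi$ be irreducible and well-behaved with spectral measure $E_\pi$ and scalar measure $\mu=\langle E_\pi(\cdot)\xi,\xi\rangle$. Any $\alpha$-invariant Borel set $\Delta\subseteq\cBp$ gives a spectral projection $E_\pi(\Delta)$ that commutes with $\Res_\cB\pi$ and, by the covariance relation \eqref{eq_defn_well_beh_u}, reduces each $\pi(a_g)$; thus it reduces $\pi$, so irreducibility forces $E_\pi(\Delta)\in\set{0,I}$, i.e. $\mu$ is ergodic for the partial action $\alpha$. The hypothesis that $\alpha$ admits a measurable countably separated section makes the orbit equivalence relation smooth, and for a smooth (countably separated) Borel equivalence relation every ergodic quasi-invariant measure concentrates on a single orbit — the Effros/Mackey/Glimm dichotomy, adapted to partial actions. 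Hence $E_\pi$ is supported on some $\Orb\chi$ and $\pi$ is associated with that orbit. I expect this disintegration/smoothness step to be the principal obstacle: it is precisely where the countably separated section is indispensable, since without it ergodic measures may fail to be orbit-supported and the conclusion breaks down.
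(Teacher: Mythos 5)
The paper does not actually prove this proposition: it is stated as a collection of imported results, with the proof deferred entirely to Propositions 18, 29 and Theorem 7 of \cite{SS}. Your reconstruction follows what those cited proofs do in all three items --- joint spectral theorem plus a support argument and the polar-decomposition covariance computation for (i), the spectral theorem for integrable representations of countably generated commutative $*$-algebras from \cite{s} for (iii), and ergodicity of $E_\pi$ on invariant sets combined with the countably separated section to isolate a single orbit for (ii) --- so in substance it is the same approach and I see no essential gap. Two small points of looseness: in (i) the support projection of $c_g=|\overline{\pi(a_g)}|$ is $E_\pi(\set{\widehat{a_g^*a_g}>0})$, which can be strictly smaller than $E_\pi(\cD_{g^{-1}})$ when $\cA_g$ is not generated by the single element $a_g$ over $\cB$ (harmless here, since \eqref{eq_defn_well_beh} only constrains $\pi(a_g)$ where $c_g\neq 0$, but the identification as stated is not correct in general); and in (iii) strong commutativity is not something you can ``force from closedness and countable generation'' --- it is exactly the content of the integrability hypothesis as unpacked by the spectral theorem for integrable representations in \cite{s}, so only the second of your two proposed routes is viable, and the transfer of the $\supp E_\pi\subseteq\cBp$ argument from the bounded case needs the extra remark that one may test positivity of $\widehat{a^*a}$ against vectors $E_\pi(\Delta)\varphi$ lying in the domain of $\overline{\pi(a)}$.
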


A measurable set $\Gamma$ is \textit{countably separated} if and only if there exist Borel sets $B_k,\ k\in\mathbb{N},\ \Gamma\subseteq\bigcup_{k\in\mathbb{N}}B_k$ such that for arbitrary $x,y\in\Gamma,\ x\neq y,$ we have $x\in B_{k_0},\ y\notin B_{k_0}$ for some $k_0\in\mathbb{N}$. A subset $\Gamma$ containing exactly one point from each orbit is called a \textit{section} of a partial dynamical system. 

Recall, that for a subgroup $H\subseteq G,$ $\cA_H=\oplus_{g\in H}\cA_g$ is a $*$-subalgebra of $\cA.$

\begin{thm}[See Theorem 5 in \cite{SS}]
	Let $\chi\in\widehat{\mathcal{B}}^+$ be a character and let $H=\St\chi$ be its stabilizer group. Then the map
	\begin{align*}
		\rho\mapsto\textnormal{Ind}_{\mathcal{A}_H\uparrow\mathcal{A}}(\rho)=\pi
	\end{align*}
	is a bijection from the set of unitary equivalence classes of inducible \sloppy$\ast$-representations $\rho$ of $\mathcal{A}_H$ for which
	\begin{align}\label{Res multiple character}
		\textnormal{Res}_{\mathcal{B}}\rho \textit{ corresponds to a multiple of the character }\chi
	\end{align}
	onto the set of unitary equivalence classes of well-behaved \sloppy$\ast$-representations $\pi$ of $\mathcal{A}$ associated with $\Orb\chi$. A \sloppy$\ast$-representation $\rho$ satisfying \eqref{Res multiple character} is bounded and inducible. Moreover, $\pi$ is irreducible if and only if $\rho$ is irreducible.
	\label{thm_all_induced}
\end{thm}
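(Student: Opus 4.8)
The plan is to prove this imprimitivity-type statement by exhibiting induction from the $\chi$-fibre and restriction to it as mutually inverse constructions. First I would analyse the source category. Since $H=\St\chi$ fixes $\chi$, a $*$-representation $\rho$ of $\cA_H$ with $\Res_\cB\rho$ a multiple of $\chi$ is precisely one in which every $b\in\cB$ acts by the scalar $\chi(b)$. For each $h\in H$ and $a_h\in\cA_h$ the element $a_h^*a_h$ lies in $\cB$, so $\rho(a_h)^*\rho(a_h)=\chi(a_h^*a_h)\Id$ is a bounded scalar operator; hence $\norm{\rho(a_h)}=\chi(a_h^*a_h)^{1/2}<\infty$. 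As every element of $\cA_H$ is a finite sum of homogeneous components $a_h b$, this forces $\rho$ to be bounded. For inducibility I would show the form \eqref{eq_sesq} attached to the canonical projection $p_H:\cA\to\cA_H$ is positive semidefinite by a block decomposition: fixing coset representatives $g_i$ of $G/H$, the projection $p_H(y^*x)$ vanishes unless the homogeneous components of $x,y$ lie in the same coset $g_iH$, so the form is block-diagonal over the cosets, and on the block indexed by $g_i$ positivity reduces, via $a_{g_i}^*a_{g_i}\in\cB$ acting by $\chi(a_{g_i}^*a_{g_i})\geq 0$, to the positivity of $\rho$ as a genuine representation of $\cA_H$. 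Blocks with $\chi\notin\cD_{g_i^{-1}}$ contribute zero and so are absorbed into $\cK_\rho$.

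Next I would study the forward map $\rho\mapsto\pi:=\Ind_{\cA_H\uparrow\cA}(\rho)$. Keeping the representatives $g_i$ with $\chi\in\cD_{g_i^{-1}}$, the induced module $\cA\otimes_{\cA_H}\cD(\rho)$ decomposes, after passing to the quotient by $\cK_\rho$, into blocks indexed by the $g_i$, on which $\cB$ acts by the characters $\chi^{g_i}$. This yields a spectral measure $E_\pi$ supported exactly on $\Orb\chi=\set{\chi^{g_i}}$ and shows $\Res_\cB\pi$ is integrable, giving condition $(i)$ of Definition \ref{definition well-behaved representation}. The covariance condition $(ii)$ I would verify on the generators using action formulas of the type in Proposition \ref{orthonormal base of representation space}: $\pi(a_g)$ sends the $\chi^{g_i}$-block into the $\chi^{g\cdot g_i}$-block, so it intertwines the spectral projection over $\Delta$ with the projection over $\alpha_g(\Delta\cap\cD_{g^{-1}})$. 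Hence $\pi$ is well-behaved and associated with $\Orb\chi$.

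For the inverse map, given a well-behaved $\pi$ associated with $\Orb\chi$, I would set $\cH_\chi:=E_\pi(\set{\chi})\cH_\pi$. Since $\chi$ is a single point of the countable orbit, condition $(ii)$ applied with $h\in H$, for which $\alpha_h(\chi)=\chi$, gives $\pi(a_h)E_\pi(\set{\chi})\supseteq E_\pi(\set{\chi})\pi(a_h)$, so each $\pi(a_h)$ preserves $\cH_\chi$; thus $\cA_H$ acts on $\cH_\chi$, defining a representation $\rho$ with $\Res_\cB\rho$ a multiple of $\chi$, which is bounded and inducible by the first paragraph. That the two maps are mutually inverse reduces to two facts: restricting $\Ind\rho$ to its $\chi$-block returns $\rho$, which is immediate from the block decomposition; and the closed linear span of $\bigcup_i\pi(a_{g_i})\cH_\chi$ exhausts $\cH_\pi$, which must follow from $E_\pi$ being concentrated on $\Orb\chi$ together with the fact that $\pi(a_{g_i})$ maps $\cH_\chi$ densely onto $E_\pi(\set{\chi^{g_i}})\cH_\pi$. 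This reconstruction is the genuine imprimitivity content, and it is the step I expect to be the main obstacle: it requires controlling the (possibly unbounded) operators $\pi(a_g)$ through the partial-isometry reformulation \eqref{eq_defn_well_beh_u} of condition $(ii)$, and showing that each orbit point is reached \emph{densely} from the fibre over $\chi$.

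Finally, the irreducibility statement follows formally from the bijection. Induction carries direct sums to direct sums, so a nontrivial decomposition of $\rho$ yields one of $\pi$; conversely, a reducing subspace of $\pi$ meets $\cH_\chi$ in an $\cA_H$-invariant subspace, and the reconstruction above shows that this intersection determines the whole subspace, so any nontrivial decomposition of $\pi$ descends to a nontrivial decomposition of $\rho$.
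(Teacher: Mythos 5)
The paper itself gives no proof of this theorem: it is imported verbatim from \cite{SS} (Theorem 5 there), so there is no in-paper argument to compare against. Your outline is, in substance, the strategy of that source: decompose $\cA\otimes_{\cA_H}\cD(\rho)$ into blocks over the cosets $g_iH$ with $\chi\in\cD_{g_i^{-1}}$, read off the spectral measure supported on $\Orb\chi$, and invert the construction by restricting a well-behaved $\pi$ to the fibre $E_\pi(\set{\chi})\cH_\pi$. The boundedness argument via $\rho(a_h)^*\rho(a_h)=\chi(a_h^*a_h)\Id$ is exactly right.

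Two places where the sketch is thinner than what is actually required. First, on a single coset block the form \eqref{eq_sesq} becomes a Gram matrix of the type $\left[\chi(c_l^*c_k)\right]$ with $c_k\in\cA_{g_i}$, and positivity of the diagonal entries $\chi(c^*c)$ does \emph{not} by itself make this matrix positive semidefinite. Your reduction ``via $a_{g_i}^*a_{g_i}$'' silently assumes $\cA_{g_i}=a_{g_i}\cB$ (true in all three examples of this paper, and then the matrix is rank one times $\chi(a_{g_i}^*a_{g_i})$), but the theorem is stated for a general graded algebra; there one needs the identity $\chi(c^*d)\chi(d^*c)=\chi(c^*c)\chi(d^*d)$ for $c,d\in\cA_g$, which the paper's footnote points out follows from commutativity of $\cB$. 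Second, in the reconstruction you correctly identify the density of $\pi(a_{g_i})\cH_\chi$ in $E_\pi(\set{\chi^{g_i}})\cH_\pi$ as the crux, but you assert it rather than derive it; this is precisely where \eqref{eq_defn_well_beh_u} must be used to identify the initial and final spaces of the partial isometry $u_{g_i}$ with $E_\pi(\cD_{g_i^{-1}})\cH_\pi$ and $E_\pi(\cD_{g_i})\cH_\pi$, and it is the genuine content of the claim that every well-behaved representation associated with an orbit is induced. Neither point breaks your plan, but both have to be supplied for the argument to close.
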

The last theorem suggests the following algorithm for description of all irreducible well-behaved $*$-representations of $\cA:$
\begin{itemize}
 \item determine $\cBd,\ \cBp,$ the partial action of $G$ on $\cBp$ and a section $\Gamma\subseteq\cBp$,
 \item for each $\chi\in\Gamma$
  \begin{itemize}
   \item if the stabilizer $\St\chi$ is trivial, compute $\Ind\chi,$
   \item otherwise find all irreducible representations $\rho$ of $\cA_{\St\chi}$ satisfying \eqref{Res multiple character} and compute $\Ind\rho.$
  \end{itemize}
\end{itemize}

If the Proposition \ref{prop_properties}, (ii) applies, then we obtain all irreducible well-behaved representations of $\cA.$

\section{The $q$-Oscillator Algebra}
\label{sec: q-oscillator algebra}

By the \textit{quantum harmonic oscillator} ($q$-\textit{oscillator}) we mean the following relation
\begin{gather}\label{eq_qosc}
aa^{\ast}=\Un{}+qa^{\ast}a,\ q>0.
\end{gather}

In this section, we use the notation of the $q$-calculus $[[k]]_q=1+q+\dots+q^{k-1}.$ Further, we put 
$$
F(t):=1+qt.
$$ 
Clearly $F([[k]]_q)=[[k+1]]_q,\ k\in\dN_0.$

In \cite{CGP} the authors have obtained the following representations of \eqref{eq_qosc} by Hilbert space operators:
\begin{compactitem}
\item For every $q>0$ the Fock representation $\pi_F$ acting on the orthonormal base $\set{\gee_k}_{k\in\dN_0}$ as
	\begin{align}\label{eq_fock_rep}
		\pi_F(a)\gee_k=[[k]]_q^{1/2}\gee_{k-1},\ 
		\pi_F(a^{\ast})\gee_k=[[k+1]]_q^{1/2}\gee_{k+1},\ \mbox{where}\ \gee_{-1}:=0.
	\end{align}
\item  For $q\in(0,1)$ the series of unbounded $*$-representations $\pi_\gamma, \gamma\in(0,1]$ acting on the orthonormal base $\set{\gee_k}_{k\in\dZ}$ as
\begin{align}\label{eq_gamma_rep}
	\pi_\gamma(a)\gee_k=\left(\frac{1+q^{\gamma +k}}{1-q}\right)^{1/2}\gee_{k+1},\ 
	\pi_\gamma(a^*)\gee_k=\left(\frac{1+q^{\gamma +k+1}}{1-q}\right)^{1/2}\gee_{k-1}.
\end{align}
\item  For $q\in(0,1)$ the series of one-dimensional $*$-representations
\begin{gather}\label{eq_onedim_rep}
	\pi_\varphi(a)=e^{\img\varphi}(1-q)^{-1/2},\; \pi_\varphi(a^{\ast})=e^{-\img\varphi}(1-q)^{-1/2},\ \varphi\in [0,2\pi).
\end{gather}
\end{compactitem}

Using the orbit method described in the previous section, we classify all irreducible well-behaved $*$-representations of the \textit{$q$-oscillator algebra}
\begin{align*}
	\mathcal{A}=\mathbb{C}\langle a,a^{\ast}\mid aa^{\ast}=qa^{\ast}a+\Un{}\rangle,\qquad q>0.
\end{align*}
We will see that the formulas for the irreducible well-behaved $*$-representations of $\cA$ coincide with \eqref{eq_fock_rep}--\eqref{eq_onedim_rep}.

\medskip
We now introduce the ingredients needed for the orbit method. Define the $\dZ$-grading on $\cA$ by setting $a\in\cA_1,\; a^{\ast}\in\cA_{-1}$ and put $\cB:=\cA_0.$ It is easily checked that $\cB=\dC[N]$, where $N=a^*a,$ and $\cA_n=a^n\cB,\ \cA_{-n}=a^{*n}\cB$ for every $n\in\dN.$ Using induction on $k\in\dN$ we obtain the relations
\begin{align}
\begin{split}
a^ka^{\ast k}&=\prod_{j=1}^k{\left(q^jN+[[j]]_q\Un{}\right)},\ k\in\dN.\\
a^{\ast k}a^k&=\prod_{j=0}^{k-1}{\left( q^{-j}N+[[-j]]_q\Un{}\right)},\ k\in\dN.
\label{q-oscillator algebra relations}
\end{split}
\end{align}

Since $\cB=\dC[N],$ every character on $\cB$ is of the form $\chi_t(N)=t\in\mathbb{R}.$ In what follows we identify the space of all characters $\cBd$ with $\dR.$ 

\begin{prop}\label{prop_q_osc_cBp}
	\begin{itemize}
		\item [(i)] $\begin{aligned}[t]
		 \cBp &=\left\{[[k]]_q\mid k\in\mathbb{N}_0\right\}\textnormal{ for } q\geq 1,\\
		 \cBp &=\left\{[[k]]_q\mid k\in\mathbb{N}_0\right\}\cup[1/(1-q),+\infty)\textnormal{ for } q\in(0,1).
		\end{aligned}$
		\item [(ii)] The partial action $\alpha=\left(\{\mathcal{D}_{n}\}_{n\in\mathbb{Z}},\{\alpha_n\}_{n\in\mathbb{Z}}\right)$ is given as follows. 
		\begin{align*}
			\mathcal{D}_{-n} &=\left\{ [[k]]_q\mid k\geq n \right\}\textnormal{ if } q\geq 1,\\
			\mathcal{D}_{-n} &=\left\{ [[k]]_q\mid k\geq n \right\}\cup\left[1/(1-q),\infty\right) \textnormal{ if } q\in(0,1).
		\end{align*}If $\chi_t\in\mathcal{D}_{-n}$, then $\chi_t^n=\chi_{F^{-n}(t)}.$ In particular, $\chi_{[[k]]_q}^n=\chi_{[[k-n]]_q}^{}$ for $n\leq k$. 
	\end{itemize}
\end{prop}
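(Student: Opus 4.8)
The plan is to reduce membership in $\cBp$ to a positivity condition on iterates of the affine map $F(t)=1+qt$, and then read off both (i) and (ii) from the one-dimensional dynamics of $F$. First I would invoke Lemma \ref{lem_poschar}: since $\cA_n=a^n\cB$ and $\cA_{-n}=a^{*n}\cB$, a character $\chi_t$ lies in $\cBp$ if and only if $\chi_t(a^{*n}a^n)\ge 0$ and $\chi_t(a^n a^{*n})\ge 0$ for every $n\in\dN$. Evaluating the relations \eqref{q-oscillator algebra relations} at $\chi_t$ and using the elementary identity $q^j t+[[j]]_q=F^j(t)$ (immediate by induction from $F([[k]]_q)=[[k+1]]_q$, valid for negative $j$ as well), these become
$$\chi_t(a^{*n}a^n)=\prod_{j=0}^{n-1}F^{-j}(t)\ge 0,\qquad \chi_t(a^n a^{*n})=\prod_{j=1}^{n}F^{j}(t)\ge 0.$$
Thus $\cBp$ is exactly the set of $t$ for which all of these forward and backward products are nonnegative.

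The core is then a dynamical analysis, built on two observations: (1) $F$ is increasing and $F(s)=1+qs\ge 1>0$ whenever $s\ge 0$, so the forward orbit of any nonnegative point stays strictly positive; and (2) $F^{-j}(t)=0$ precisely when $t=F^{j}(0)=[[j]]_q$. From (1) the forward condition $\prod_{j=1}^{n}F^j(t)\ge 0$ for all $n$ is equivalent to $F(t)\ge 0$, i.e. $t\ge -1/q$. For the backward condition I would split on whether $t=[[k]]_q$ for some $k$: if so, the backward orbit passes through $0$ at step $k$ and is $\ge 0$ before that, so every backward product is $\ge 0$; if $t$ is not of this form, no backward iterate vanishes, and $\prod_{j=0}^{n-1}F^{-j}(t)\ge 0$ for all $n$ forces, inductively via $P_{n+1}/P_n=F^{-n}(t)$, that $F^{-j}(t)>0$ for every $j\ge 0$. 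Since the backward condition already forces $t=\chi_t(N)\ge 0$, it implies the forward one, whence $\cBp=\{[[k]]_q:k\in\dN_0\}\cup\{t:F^{-j}(t)>0\ \forall j\ge 0\}$.

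It remains to identify $\{t:F^{-j}(t)>0\ \forall j\}$ from the fixed point $t^{*}=1/(1-q)$ of $F$. For $q<1$ one has $t^{*}>0$ and $F^{-1}$ expanding away from $t^{*}$, so the backward orbit stays positive exactly when $t\ge t^{*}$, giving the half-line $[1/(1-q),\infty)$; since $[[k]]_q\nearrow t^{*}$ these discrete points are disjoint from it, yielding case (i) for $q\in(0,1)$. For $q\ge 1$ one has $t^{*}\le 0$ (with $F^{-j}(t)=t-j\to-\infty$ when $q=1$), so every backward orbit eventually becomes negative, the continuous part is empty, and only $\{[[k]]_q\}$ remains. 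For part (ii), by \eqref{eq_defn_cDg} the set $\cD_{-n}$ consists of those $\chi_t\in\cBp$ with $\chi_t(a^{*n}a^n)=\prod_{j=0}^{n-1}F^{-j}(t)\ne 0$, i.e. $t\notin\{[[0]]_q,\dots,[[n-1]]_q\}$, and the stated description is read off from (i). To compute $\alpha_n$ I would take the adjoint of $aN=F(N)a$ to get $a^{*}N=F^{-1}(N)a^{*}$, iterate to $a^{*n}N=F^{-n}(N)a^{*n}$, and substitute into \eqref{character defined}: $(\alpha_n\chi_t)(N)=\chi_t(a^{*n}Na^n)/\chi_t(a^{*n}a^n)=F^{-n}(t)$, so $\chi_t^n=\chi_{F^{-n}(t)}$; specializing $t=[[k]]_q$ and using $F^{-1}([[k]]_q)=[[k-1]]_q$ gives $\chi_{[[k]]_q}^n=\chi_{[[k-n]]_q}$.

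The hard part will be the backward positivity analysis. The delicate points are distinguishing the exceptional orbits that pass exactly through $0$ (the points $[[k]]_q$, where all products vanish and positivity holds for free) from the generic orbits (where nonnegativity of every partial product forces strict positivity of each iterate), and then correctly tracking the three regimes $q<1$, $q=1$, $q>1$ through the fixed point $t^{*}=1/(1-q)$ — in particular confirming that the boundary point $t^{*}$ is itself included because it is a positive fixed point. By contrast, the forward direction, the reduction via Lemma \ref{lem_poschar}, and the computation of $\alpha_n$ are routine once the commutation identities $aN=F(N)a$ and $a^{*}N=F^{-1}(N)a^{*}$ are in hand.
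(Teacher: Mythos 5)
Your proposal is correct and follows essentially the same route as the paper: both reduce membership in $\cBp$ via Lemma \ref{lem_poschar} and the product formulas \eqref{q-oscillator algebra relations} to one-variable sign conditions (your $F^{-j}(t)>0$ is literally the paper's $t-[[j]]_q>0$, since $F^{-j}(t)=q^{-j}(t-[[j]]_q)$, and your fixed point $1/(1-q)$ is the paper's limit of $[[k]]_q$), and both compute $\alpha_n$ from the same commutation identities. The only differences are cosmetic repackaging of the case analysis in dynamical language.
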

\begin{proof}
$(i)$ By Lemma \ref{lem_poschar} a character $\chi\in\cBd$ belongs to $\cBp$ if and only if $\chi(a^ka^{\ast k})\geq 0,\ \chi(a^{\ast k}a^k)\geq 0$ for all $k\in\dN.$ Further, \eqref{q-oscillator algebra relations} implies that $a^na^{*n}=\sum_{j=0}^n\alpha_jN^j$ for some $\alpha_j\geq 0.$ Hence $\chi_t\in\widehat{\mathcal{B}}^+$ if and only if $\chi(a^{\ast k}a^k)=\chi\left(\prod_{j=0}^{k-1} {\left(q^{-j}N+[[-j]]_q\Un{}\right)}\right)\geq  0$ for all $k\in\mathbb{N}$. The last system of inequalities is equivalent to
\begin{align}
	\prod_{j=0}^{k}{(t-[[j]]_q)}\geq 0 \textnormal{ for all } k\in\mathbb{N}_0.
	\label{eq_cBp}
\end{align}
Consider first $q\geq 1$. Then $[[k]]_q\rightarrow\infty,\ k\rightarrow\infty,$ and \eqref{eq_cBp} is satisfied if and only if $t=[[k]]_q$ for some $k\in\mathbb{N}_0$. If $q\in(0,1),$ then $[[k]]_q\rightarrow\frac{1}{1-q},\ k\rightarrow\infty,$ and every $t\geq \frac{1}{1-q}$ satisfies \eqref{eq_cBp}. For $t\in\left[0,\frac{1}{1-q}\right),$ \eqref{eq_cBp} holds if and only if $t=[[k]]_q$ for some $k\in\mathbb{N}_0$.\\	
$(ii)$ One can verify by induction on $n\in\mathbb{N}$ that $F^{n}(t)=q^nt+[[n]]_q$ for all $n\in\dZ$. Using \eqref{q-oscillator algebra relations}, we obtain 
$$
\chi_t^n(N)=\frac{\chi_t(a^{*n}Na^n)}{\chi_t(a^{*n}a^n)}=\chi_t(q^{-n}N+[[-n]]_q)=\chi_{F^{-n}(t)}(N),
$$
for $\chi_t\in\mathcal{D}_{-n},\ n\in\dN$, and
\begin{align*}
	\chi_t^{-n}(N)=\frac{\chi_t(a^nNa^{*n})}{\chi_t(a^na^{*n})}=q^{-1}\chi_t(q^{n+1}N+[[n+1]]_q)-q^{-1}=\chi_{F^n(t)}(N).
\end{align*}
for $\chi_t\in\cD_n,\ n\in\dN.$ Inequalities \eqref{eq_cBp} imply that for $q>0$ and $t=[[k]]_q$, we have $\chi_t\in\mathcal{D}_{-n}$ if and only if $n\leq k$. In case $q\in(0,1)$ and $t\geq \frac{1}{1-q}$, we have $\chi_t\in\mathcal{D}_{-n}$ for all $n\in\dZ$.
\end{proof}

Using Proposition \ref{prop_q_osc_cBp}, we conclude that the stabilizer $\St\chi_t$ of $\chi_t\in\cBp$ is trivial except for the case $t=1/(1-q)$, where the stabilizer is $\dZ.$ Define the subset $\Gamma\subseteq\cBp$ as 
\begin{align*}
	\Gamma&=\set{0}\cup\set{\frac{1}{1-q}}\cup\set{\frac{1+q^{\gamma}}{1-q} \mid \gamma\in (0,1]},\;\textnormal{ if } q\in(0,1),\\
	\Gamma&=\set{0},\;\textnormal{ if } q\geq 1.
\end{align*}
Direct computations using Proposition \ref{prop_q_osc_cBp} show that each orbit under the partial action of $\mathbb{Z}$ on $\widehat{\mathcal{B}}^+$ intersects $\Gamma$ in exactly one point, i.e. $\Gamma$ is a section of the partial action. The topology on $\widehat{\mathcal{B}}^+$ is induced from the standard topology on $\mathbb{R}$. Hence $\Gamma$ is countably separated and measurable. By Proposition \ref{prop_well}(ii) every irreducible well-behaved $*$-representation of $\mathcal{A}$ is associated to some $\textnormal{Orb}\chi,\ \chi\in\Gamma$. For $\chi\in\Gamma$ we consider three cases.
\begin{itemize}
	\item [$(i)$] Case $\chi=\chi_0.$ Since the stabilizer of $\chi$ is trivial, the only irreducible well-behaved $*$-representation associated to $\textnormal{Orb}\chi$ is (up to unitary equivalence) $\pi_F:=\textnormal{Ind}\chi$. Using Proposition \ref{orthonormal base of representation space} and relations \eqref{q-oscillator algebra relations}, we calculate the action of $\pi_F$ on the orthonormal basis $\{e_{-n}\}_{n\in\mathbb{N}_0}$ of the representation space $\mathcal{H}_{\pi_F}$.
	\begin{align*}
	\pi_F(a)e_{-n}&=\frac{\chi(a^{n-1}aa^{*n})}{\chi(a^{n-1}a^{*(n-1)})^{1/2}\chi(a^na^{*n})^{1/2}}e_{-n+1}=\frac{\chi(a^{n}a^{*n})^{1/2}}{\chi(a^{n-1}a^{*(n-1)})^{1/2}}e_{-n+1}\\
	       &=q^{n/2}\left(\chi(N) -[[-n]]_q \right)^{1/2}e_{-n+1}=[[n]]_q^{1/2}e_{-n+1},\\
	\pi_F(a^{\ast})e_{-n}&=\frac{\chi_0(a^{n+1}a^{\ast}a^{\ast n})}{\chi_0(a^{n+1}a^{*(n+1)})^{1/2}\chi_0(a^na^{\ast n})^{1/2}}e_{-n-1}=\frac{\chi_0(a^{n+1}a^{*(n+1)})^{1/2}}{\chi_0(a^na^{\ast n})^{1/2}}e_{-n-1}\\
			     &=q^{(n+1)/2}(\chi(N)-[[-n-1]]_q)^{1/2}e_{-n-1}=[[n+1]]_q^{1/2}e_{-n-1},
	\end{align*}
	where $e_1:=0$ and $n\in\dN_0$. It exists for any $q>0$ and is bounded if and only if $q\in (0,1)$.	
	
	\item [$(ii)$] Case $\chi=\chi_{\frac{1+q^{\gamma}}{1-q}},\ \gamma\in(0,1].$ The stabilizer of $\chi$ is again trivial, thus $\pi_\gamma:=\textnormal{Ind}\chi_{\frac{1+q^{\gamma}}{1-q}}$ is the only irreducible well-behaved $*$-representation associated to $\textnormal{Orb}\chi$. We calculate the action of $\pi_{\gamma}(a)$ respectively $\pi_{\gamma}(a^{\ast})$ using Proposition \ref{orthonormal base of representation space} and relations \eqref{q-oscillator algebra relations}. For $n\in\dZ$ we have
	\begin{align*}\pi_{\gamma}(a)e_n&=\frac{\chi(a^{*(n+1)}aa^n)}{\chi(a^{\ast(n+1)}a^{n+1})^{1/2}\chi(a^{\ast n}a^n)^{1/2}}e_{n+1}=\frac{\chi(a^{\ast(n+1)}a^{n+1})^{1/2}}{\chi(a^{\ast n}a^n)^{1/2}}e_{n+1}\\
	&=\left(q^{-n}\frac{1+q^{\gamma}}{1-q}+\frac{1-q^{-n}}{1-q}\right)^{1/2}e_{n+1}=\left(\frac{1+q^{\gamma-n}}{1-q}\right)^{1/2}e_{n+1}.
	\end{align*}
	In the same way we obtain
	\begin{align*}
	\pi_\gamma(a^*)e_n=\left(\frac{1+q^{\gamma-n+1}}{1-q}\right)^{1/2}e_{n-1},\ \mbox{for}\ n\in\mathbb{Z}.
	\end{align*}
	Note that $\pi_{\gamma}$ is not bounded for every $\gamma\in(0,1]$.
	
	\item [$(iii)$] Case $\chi=\chi_{\frac{1}{1-q}}.$ The stabilizer group $H$ of $\chi_{\frac{1}{1-q}}$ is $\mathbb{Z}.$ Let $\rho$ be an irreducible $*$-representation of $\cA$ satisfying \eqref{Res multiple character}. Since $\chi(aa^{\ast}-a^{\ast}a)=0$, we have $\rho(a)\rho(a^{\ast})=\rho(a^{\ast})\rho(a).$ By Schur's Lemma $\rho$ is one-dimensional. For $\lambda\in\dC$ such that $\lambda=\rho(a)$ we get $\left|\lambda\right|^2 =\rho(aa^{\ast})=\rho(\Un{}+qa^{\ast}a)=1+q\left|\lambda\right|^2.$  Hence $\rho=\rho_\varphi,$ for some $\varphi\in[0,2\pi),$ where $\rho_\varphi(a)=e^{\textnormal{i}\varphi}(1-q)^{-1/2}.$ Since $\cA_H=\cA,\ \pi_\varphi:=\Ind_{\cA_H\uparrow\mathcal{A}}\rho_\varphi$ is equivalent to $\rho_\varphi$ and we have
	\begin{align*}
	\pi_\varphi(a)=e^{\img\varphi}(1-q)^{-1/2},\; \pi_\varphi(a^{\ast})=e^{-\img\varphi}(1-q)^{-1/2},\ \varphi\in [0,2\pi).
	\end{align*}
\end{itemize}
	
By Theorem \ref{thm_all_induced} these are all (up to unitary equivalence) irreducible well-behaved $\ast$-representations of $\cA.$ Moreover, putting $\gee_k:=e_{-k},\ k\in\mathbb{Z},$ we see that the above formulas coincide with \eqref{eq_fock_rep}, \eqref{eq_gamma_rep} and \eqref{eq_onedim_rep} respectively. We have proved the following

\begin{thm}
Every irreducible well-behaved $*$-representation of the $q$-oscillator algebra, $q>0$, is induced from a one-dimensional $*$-representation.
\end{thm}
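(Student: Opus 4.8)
The plan is to read the theorem off from the orbit-method classification, for which all the structural data have already been assembled: the $\dZ$-grading with $\cB=\dC[N]$, the positive spectrum $\cBp$ and the partial action $\alpha$ computed in Proposition \ref{prop_q_osc_cBp}, and the section $\Gamma$. Since the topology on $\cBp$ is the one induced from $\dR$, the section $\Gamma$ is countably separated and measurable, so Proposition \ref{prop_well}(ii) applies and every irreducible well-behaved $*$-representation of $\cA$ is associated with a unique orbit $\Orb\chi,\ \chi\in\Gamma$. By Theorem \ref{thm_all_induced}, the irreducible well-behaved representations associated with $\Orb\chi$ are, up to unitary equivalence, exactly the $\Ind_{\cA_H\uparrow\cA}\rho$ where $H=\St\chi$ and $\rho$ ranges over irreducible inducible representations of $\cA_H$ whose restriction to $\cB$ is a multiple of $\chi$. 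Thus the entire statement reduces to checking, for each $\chi\in\Gamma$, that such a $\rho$ is one-dimensional.

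First I would dispose of the characters with trivial stabilizer. By Proposition \ref{prop_q_osc_cBp} the stabilizer $\St\chi_t$ is trivial for every $t\in\Gamma$ except $t=1/(1-q)$. When $\St\chi$ is trivial we have $\cA_H=\cB$, and condition \eqref{Res multiple character} forces $\rho$ to be (equivalent to) the one-dimensional character $\chi$ of $\cB$ itself. Hence the associated representation is $\Ind\chi$, induced from a one-dimensional $*$-representation; the explicit matrix coefficients of $\pi_F$ (for $\chi_0$) and $\pi_\gamma$ (for $\chi_{(1+q^\gamma)/(1-q)}$) then drop out of Proposition \ref{orthonormal base of representation space} and the product formulas \eqref{q-oscillator algebra relations}.

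The one case that carries real content is $\chi=\chi_{1/(1-q)}$, which occurs only for $q\in(0,1)$ and whose stabilizer is all of $\dZ$, so that $\cA_H=\cA$. Here I must show that any irreducible $\rho$ of $\cA$ with $\Res_\cB\rho$ a multiple of $\chi$ is one-dimensional. The decisive point is that this character kills the commutator: from $aa^*-a^*a=\Un{}+(q-1)N$ one gets $\chi(aa^*-a^*a)=1+(q-1)/(1-q)=0$, so $\rho(a)$ and $\rho(a^*)$ commute. Schur's Lemma then forces $\rho$ to be one-dimensional, $\rho(a)=\lambda$, and the relation \eqref{eq_qosc} gives $|\lambda|^2=1+q|\lambda|^2$, i.e. $|\lambda|=(1-q)^{-1/2}$, so $\rho=\rho_\varphi$ for some $\varphi\in[0,2\pi)$. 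Because $\cA_H=\cA$, inducing is trivial and $\Ind_{\cA_H\uparrow\cA}\rho_\varphi$ is unitarily equivalent to the one-dimensional $\rho_\varphi$.

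Assembling the three cases, every irreducible well-behaved $*$-representation is equivalent to one induced from a one-dimensional representation, which is the assertion. The main obstacle is precisely the stabilizer-$\dZ$ case: a priori $\cA_H=\cA$ could carry a rich family of representations, and the whole argument rests on observing that at the fixed point of the partial action the $q$-commutation relation degenerates into commutativity, after which Schur's Lemma collapses everything to one-dimensional characters.
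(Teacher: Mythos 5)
Your proposal is correct and follows essentially the same route as the paper: the orbit-method reduction via Proposition \ref{prop_well}(ii) and Theorem \ref{thm_all_induced}, the section $\Gamma$ from Proposition \ref{prop_q_osc_cBp}, and in the only nontrivial case $\chi=\chi_{1/(1-q)}$ the observation that $\chi(aa^*-a^*a)=0$ forces commutativity and hence, by Schur's Lemma, one-dimensionality with $|\lambda|^2=1+q|\lambda|^2$. This is exactly the paper's three-case argument preceding the theorem.
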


\subsection{Existence of bad $*$-representations}
In this subsection we prove the existence of a $*$-representation $\pi$ of $\cA$ which is not well-behaved and which cannot be continued to a well-behaved representation in a possibly larger Hilbert space. The idea is similar to the proof of \cite[Theorem 4.1]{s3}.

\begin{lem}
The polynomial 
$$
p:=(N-1)(N-(1+q))\in\mathbb{C}[N]
$$
is positive in every well-behaved $*$-representation of $\cA$ and $p\notin\sum\cA^2.$
\label{lem_pospol_qccr}
\end{lem}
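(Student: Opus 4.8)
The plan is to prove the two assertions separately. The positivity in well-behaved representations follows directly from the definition, whereas the relation $p\notin\sum\cA^2$ requires an argument exploiting the $\dZ$-grading; the latter is the substantive part.

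For the first claim, let $\pi$ be well-behaved. By Definition \ref{definition well-behaved representation}$(i)$ we have $\overline{\pi(p)}=\int_{\cBp}\widehat{p}(\chi)\,dE_\pi(\chi)$, and under the identification $\cBd\cong\dR$ one has $\widehat{p}(\chi_t)=(t-1)(t-(1+q))$. So it suffices to check $\widehat{p}\geq 0$ on $\cBp$. By Proposition \ref{prop_q_osc_cBp}$(i)$ every point of $\cBp$ equals $[[0]]_q=0$, equals $[[1]]_q=1$, or lies in $[1+q,\infty)$ --- indeed $[[k]]_q\geq[[2]]_q=1+q$ for $k\geq 2$, and $1/(1-q)>1+q$ when $q\in(0,1)$; on each of these sets $(t-1)(t-(1+q))\geq 0$. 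Hence $\overline{\pi(p)}$ is positive and $\langle\pi(p)\varphi,\varphi\rangle\geq 0$ for all $\varphi\in\cD(\pi)$.

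For the second claim I would argue by contradiction, writing $p=\sum_i c_i^*c_i$. Decomposing each $c_i$ into its $\dZ$-homogeneous components and comparing the $\cA_0$-components of both sides (note $p\in\cB=\cA_0$), I may assume every $c_i$ is homogeneous. A homogeneous element of $\cA_n$, $n\geq 0$, has the form $c=a^nf(N)$; since $a^{*n}a^n\in\cB$ by \eqref{q-oscillator algebra relations} and $\cB$ is commutative, $c^*c=|f(N)|^2\,a^{*n}a^n=|f(N)|^2P_n(N)$ with $P_n(N)=\prod_{j=0}^{n-1}(q^{-j}N+[[-j]]_q)$; likewise a homogeneous element of $\cA_{-m}$, $m\geq 1$, yields $|g(N)|^2Q_m(N)$ with $Q_m(N)=\prod_{j=1}^{m}(q^jN+[[j]]_q)$. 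Taking Gel'fand transforms, which are injective on $\cB=\dC[N]$, turns the hypothesis into the polynomial identity
\[
(t-1)(t-(1+q))=\sum_i|f_i(t)|^2P_{n_i}(t)+\sum_j|g_j(t)|^2Q_{m_j}(t).
\]

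The crux --- and the step I expect to be the main obstacle --- is to extract a contradiction from this identity. The key observation is that each $P_n$ and each $Q_m$ has a strictly positive leading coefficient (as does each $|f_i|^2$ and $|g_j|^2$), so no cancellation of top-degree terms can occur on the right-hand side; since the left-hand side has degree $2$, every nonzero summand must have degree $\leq 2$, leaving only $P_0,P_1,P_2$ and $Q_1,Q_2$. I would then evaluate on the open interval $(1,1+q)$, where the left-hand side is strictly negative. There the factors $q^{-j}t+[[-j]]_q$ of $P_n$ for $j=0,1$ equal $t$ and $(t-1)/q$, both strictly positive, so $P_0,P_1,P_2>0$; and every $Q_m(t)=\prod_{j=1}^{m}(q^jt+[[j]]_q)>0$ for $t>0$; meanwhile the coefficients $|f_i(t)|^2,|g_j(t)|^2$ are nonnegative. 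Hence the right-hand side is $\geq 0$ on $(1,1+q)$, contradicting $(t-1)(t-(1+q))<0$ there, and therefore $p\notin\sum\cA^2$. The real difficulty is thus conceptual rather than computational: recognizing that the homogeneous squares are built from the blocks $P_n,Q_m$, that positivity of their leading coefficients caps the admissible degree, and that precisely the surviving low-degree blocks stay positive on the spectral gap $(1,1+q)$ where $p$ itself is negative.
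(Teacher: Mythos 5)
Your proof is correct and follows essentially the same route as the paper: reduce to a sum of Hermitian squares of homogeneous elements, use the explicit product formulas \eqref{q-oscillator algebra relations} for $a^{*k}a^k$ and $a^ka^{*k}$, bound the degrees via the positivity of the leading coefficients, and derive a contradiction from the sign of $p$ between its roots $1$ and $1+q$. The only (cosmetic) differences are that the paper first absorbs the $a^na^{*n}$ blocks into $\sum\cB^2+N\sum\cB^2$ to get the shorter normal form $f^*f+Ng^*g+N(N-1)h^*h$ and then evaluates at the two roots $N=1$ and $N=1+q$ to force all terms to vanish, whereas you keep the $Q_m$ blocks and evaluate on the open gap $(1,1+q)$.
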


\begin{proof}
We first show that every element of $\sum{\cA^2\cap\cB}$ is of the form 
\begin{gather}\label{eq_aux3}
\sum_{k=0}^na^{*k}a^k\cdot p_k^*p_k^{},\ \mbox{where}\ p_k\in\dC[N],\ n\in\dN.
\end{gather}
Indeed, an element $b\in\mathcal{B}$ belongs to $\sum\cA^2$ if and only if $b=\sum{b_j^*b_j}$, where $b_j\in\mathcal{A}_{k_j}$, $k_j\in\mathbb{Z}$. Since $\mathcal{A}_k=a^k\cdot\mathcal{B}$ for every $k\in\mathbb{Z}$ (here $a^{-k}=a^{*k}$ for $k>0$), we obtain $b=\sum{a^{*k}a^k\cdot s_k}$, where $s_k\in\sum{\mathcal{B}^2}$. It is a well-known fact, that every positive polynomial in $\mathbb{C}[N]$ is a single square $r^*r$. Hence $s_k=p_k^*p_k^{},\ p_k\in\mathbb{C}[N],$ for $k\in\mathbb{Z}$. Furthermore, relations \eqref{q-oscillator algebra relations} imply $a^na^{*n}\in\sum{\mathcal{B}^2}+a^*a\sum{\mathcal{B}^2},$ which proves \eqref{eq_aux3}.

Let $\pi$ be a well-behaved $*$-representation of $\cA$ with associated spectral measure $E_{\pi}$. Since $\supp E_{\pi}\subseteq\cBp$ and $p\geq 0$ on $\cBp$, we have $\pi(p)=\int_{\cBp}p(\lambda)dE_{\pi}(\lambda)\geq 0$.

Assume to the contrary that $p\in\sum{\cA^2}$. Since the degree of $p(N)$ in $\dC[N]$ is $2$, we get by \eqref{eq_aux3}
$$
p=f^{\ast}f+a^*a\cdot g^{\ast}g+a^{*2}a^2\cdot h^{\ast}h=f^{\ast}f+Ng^{\ast}g+N(N-1)h^{\ast}h
$$ 
for some polynomials $f,g,h\in\mathbb{C}[N],$ where $\deg f\leq 1,\ \deg g=0$ and $\deg h=0$, that is, $g$ and $h$ are constant. Setting $N:=1$, we obtain $\abs{f(1)}^2+\abs{g}^2=0$, i.e. $g=0,\ f(1)=0$. Setting $N:=1+q$, we get $\abs{f(1+q)}^2+q(1+q)\abs{h}^2=0$ which implies $h=f(1+q)=0$. Since $\deg f\leq 1,\ f\equiv 0$, i.e. $p\equiv 0$, a contradiction.
\end{proof}

For the prove of the next theorem we will need the following technical result, see \cite[Lemma 2]{s2}.
\begin{lem}\label{closcone1}
Let $\cA$ be a unital $\ast$-algebra which has a faithful $\ast$-representation $\pi$ (that is, $\pi(a)=0$ implies that $a=0$) and is a union of a sequence of finite dimensional subspaces $E_n$, $n \in \dN$. Assume that for each $n \in \dN$ there exists a number $k_n\in \dN$ such that the following is satisfied: If $a \in \sum \cA^2$ is in $E_n$, then we can write $a$ as a finite sum $\sum a_j^*a_j^{}$ such that all $a_j$ are in $E_{k_n}$.\\
Then the cone $\sum \cA^2$ is closed in $\cA$ with respect to the finest locally convex topology on $\cA$.
\end{lem}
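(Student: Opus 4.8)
The plan is to reduce the assertion to a statement about finite-dimensional sections of $\cA$ and then to establish closedness there by a properness argument built on the faithfulness of $\pi$. Recall that the finest locally convex topology on $\cA$ is Hausdorff and coincides with the inductive limit of the finite-dimensional subspaces $E_n$, which we take to be increasing and exhausting; for this topology a \emph{convex} subset of $\cA$ is closed if and only if its intersection with each $E_n$ is closed in the Euclidean topology of $E_n$. Since $\sum\cA^2$ is a convex cone, it therefore suffices to show that $\sum\cA^2\cap E_n$ is closed in $E_n$ for each fixed $n$.

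Fix $n$ and put $d=\dim E_{k_n}$. By hypothesis every $a\in\sum\cA^2\cap E_n$ can be written $a=\sum_j a_j^*a_j$ with $a_j\in E_{k_n}$, and all such sums lie in the finite-dimensional space $W:=\mathrm{span}\{x^*y\mid x,y\in E_{k_n}\}$. Thus $\sum\cA^2\cap E_n=S\cap E_n$, where $S$ is the convex cone in $W$ generated by the ``squares'' $\{x^*x\mid x\in E_{k_n}\}$. First I would invoke Carath\'{e}odory's theorem for convex cones: every element of $S$ is a nonnegative combination of at most $N:=\dim W\ (\le d^2)$ such squares, and absorbing the scalars via $\lambda x^*x=(\sqrt\lambda x)^*(\sqrt\lambda x)$ shows that $S$ is exactly the image of the continuous, degree-two-homogeneous map $\Phi:E_{k_n}^{N}\to W,\ \Phi(a_1,\dots,a_N)=\sum_j a_j^*a_j$. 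Hence $\sum\cA^2\cap E_n=\Phi(E_{k_n}^N)\cap E_n$, and it remains to see that this set is closed.

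The key step, which I expect to be the main obstacle, is to show that $\Phi$ is \emph{proper}, i.e. that preimages of bounded sets are bounded; here is where faithfulness enters. Suppose, for contradiction, that $c^{(i)}=\Phi(a^{(i)})$ stays bounded while $t_i:=\norm{a^{(i)}}\to\infty$ for some fixed norm on $E_{k_n}^N$. Normalising $b^{(i)}:=a^{(i)}/t_i$ gives $\norm{b^{(i)}}=1$, so a subsequence converges to some $b\neq 0$; since $\Phi$ is homogeneous of degree two, $\Phi(b^{(i)})=\Phi(a^{(i)})/t_i^2\to 0$, whence by continuity $\Phi(b)=\sum_j b_j^*b_j=0$ in $\cA$. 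Applying $\pi$ yields $\sum_j\pi(b_j)^*\pi(b_j)=0$, so $\sum_j\norm{\pi(b_j)\varphi}^2=0$ for every $\varphi\in\cD(\pi)$, forcing each $\pi(b_j)=0$ and hence, by faithfulness of $\pi$, each $b_j=0$ — contradicting $b\neq 0$.

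With properness in hand the proof concludes routinely. Given $c^{(i)}=\Phi(a^{(i)})\to c$ in $E_n$ (note $c\in E_n$ because $E_n$ is closed), the sequence $a^{(i)}$ is bounded, so a subsequence converges to some $a\in E_{k_n}^N$, and continuity of $\Phi$ gives $c=\Phi(a)\in\sum\cA^2\cap E_n$. The genuinely delicate points are the Carath\'{e}odory bound, needed to fix the number $N$ of summands so that $\Phi$ has a domain of fixed dimension, and the properness argument above; the reduction to finite-dimensional sections and the final extraction of a convergent subsequence are standard.
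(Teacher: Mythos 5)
The paper does not actually prove this lemma: it is quoted verbatim from Schm\"udgen \cite[Lemma 2]{s2}, so there is no in-text argument to compare yours against. Judged on its own, your proof is correct and is essentially the standard argument behind that reference: reduce to finite-dimensional sections using convexity, use Carath\'eodory's theorem for cones to bound the number of summands so that the sums of squares meeting $E_n$ form the image of a single quadratic map $\Phi$ on a space of fixed finite dimension, and deduce closedness of that image from properness of $\Phi$, which in turn rests on the implication $\sum_j b_j^*b_j=0\Rightarrow b_j=0$ supplied by the faithful $*$-representation. All of these steps check out; in particular the identification $\sum\cA^2\cap E_n=S\cap E_n$ and the normalization trick $\lambda x^*x=(\sqrt\lambda x)^*(\sqrt\lambda x)$ are fine, and the contradiction $\Phi(b)=0$, $b\neq 0$ is correctly excluded via $\sum_j\norm{\pi(b_j)\varphi}^2=0$.

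Two points deserve more care in a final write-up. First, the opening claim --- that for the finest locally convex topology a \emph{convex} set is closed if and only if its trace on every finite-dimensional subspace is closed --- is true for countably-dimensional spaces but is itself a nontrivial lemma (it needs the inductive construction of a separating convex absorbing set, and it genuinely fails without convexity, since the locally convex inductive limit topology is strictly coarser than the final topology); you should either prove it or cite it, e.g.\ from \cite{s}. Second, you silently replace the given $E_n$ by an increasing exhaustion; for the reduction you need the $E_n$ to be cofinal among finite-dimensional subspaces, which is automatic in the intended (increasing) reading of the hypothesis but worth one sentence. Neither point is a gap in substance.
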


\begin{thm}\label{thm_badrep_qccr}
There exists a $*$-representation $\pi$ of the $q$-oscillator algebra $\cA$ which cannot be extended to a well-behaved representation in a possibly larger Hilbert space.
\end{thm}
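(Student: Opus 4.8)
The plan is to follow the strategy of \cite[Theorem 4.1]{s3}: produce a positive linear functional that is negative on the polynomial $p$ of Lemma \ref{lem_pospol_qccr}, pass to its GNS representation, and use the positivity of $p$ in every well-behaved representation to obstruct any well-behaved extension. The whole scheme hinges on the fact that the cone $\sum\cA^2$ is closed in the finest locally convex topology on $\cA$, so that the point $p\notin\sum\cA^2$ can be separated from the cone by a continuous functional; I would establish this closedness by verifying the hypotheses of Lemma \ref{closcone1}. For that lemma I would use the PBW-type basis $\set{a^{*i}a^j\mid i,j\in\dN_0}$ furnished by the normal-ordering relations \eqref{q-oscillator algebra relations}, set $E_n=\mathrm{Lin}\set{a^{*i}a^j\mid i+j\leq n}$, and take the Fock representation $\pi_F$ of \eqref{eq_fock_rep} as the required faithful $*$-representation; faithfulness follows because the monomials $a^{*i}a^j$ act as weighted shifts whose nonzero weights are, for a fixed net shift $i-j$, distinct functions of $k$, giving a Vandermonde-type linear independence. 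It then remains to find, for each $n$, a $k_n$ so that every $c\in\sum\cA^2\cap E_n$ is a sum $\sum_j b_j^*b_j$ with all $b_j\in E_{k_n}$.

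The heart of the matter, and the step I expect to be the main obstacle, is this degree bound, which I would prove by passing to the associated graded algebra. The total-degree filtration $\set{E_n}$ has associated graded $\mathrm{gr}\,\cA$ isomorphic to the quantum plane $\dC\langle\bar a,\bar a^*\mid \bar a\bar a^*=q\bar a^*\bar a\rangle$ (the unit in \eqref{eq_qosc} being of lower order), equipped with the induced involution $\bar a\mapsto\bar a^*$. Since this algebra is an integral domain, for $b\in\cA$ of degree $D$ the top symbol of $b^*b$ is $\sigma_D(b)^*\sigma_D(b)\neq 0$ and $\deg(b^*b)=2D$. Hence, given $c=\sum_j b_j^*b_j\in E_n$ with $D=\max_j\deg b_j$, comparison of degree-$2D$ parts gives $\sum_{j:\deg b_j=D}\sigma_D(b_j)^*\sigma_D(b_j)=\sigma_{2D}(c)$, which vanishes as soon as $2D>n$. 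The quantum plane admits its own faithful Fock-type $*$-representation $\bar\pi$ with $\bar\pi(\bar a^*)\gee_k=q^{k/2}\gee_{k+1}$; applying $\bar\pi$ to this vanishing sum of squares yields $\sum_j\norm{\bar\pi(\sigma_D(b_j))v}^2=0$ for every $v$, whence $\sigma_D(b_j)=0$ for each $j$ with $\deg b_j=D$, contradicting $\deg b_j=D$. Therefore $2D\leq n$, so every representing family satisfies $b_j\in E_{\lfloor n/2\rfloor}\subseteq E_n$, and $k_n=n$ works; Lemma \ref{closcone1} now gives that $\sum\cA^2$ is closed.

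With closedness in hand and $p\notin\sum\cA^2$ by Lemma \ref{lem_pospol_qccr}, I would work in the real vector space of Hermitian elements, where $p=p^*$ lies outside the closed convex cone $\sum\cA^2$, and apply the separation theorem to obtain an $\dR$-linear functional separating $p$ from the cone; extending it to a $*$-functional $f$ on $\cA$ produces a positive functional with $f(p)<0\leq f(a^*a)$ for all $a\in\cA$. Since a positive functional with $f(\Un{})=0$ vanishes identically by Cauchy--Schwarz, we have $f(\Un{})>0$ and may normalize $f(\Un{})=1$. Let $\pi$ be the GNS representation of $f$ with cyclic vector $\xi$, so that $\langle\pi(p)\xi,\xi\rangle=f(p)<0$. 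If $\pi$ admitted a well-behaved extension $\tilde\pi$ on a larger Hilbert space with $\cD(\pi)\subseteq\cD(\tilde\pi)$ and $\tilde\pi\upharpoonright\cD(\pi)=\pi$, then $\tilde\pi(p)\geq 0$ by Lemma \ref{lem_pospol_qccr}, forcing $\langle\tilde\pi(p)\xi,\xi\rangle\geq 0$; but this quantity equals $\langle\pi(p)\xi,\xi\rangle=f(p)<0$, a contradiction. Hence $\pi$ is a $*$-representation that cannot be continued to a well-behaved one in any larger Hilbert space.
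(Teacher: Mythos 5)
Your proposal is correct and follows essentially the same route as the paper: Lemma \ref{lem_pospol_qccr} plus the closedness of $\sum\cA^2$ from Lemma \ref{closcone1}, separation by Hahn--Banach, the GNS construction, and the contradiction $\langle\pi(p)\xi,\xi\rangle=\varphi(p)<0$ against $\pi(p)\geq 0$. The only difference is that you explicitly verify the hypotheses of Lemma \ref{closcone1} (faithfulness of the Fock representation and the degree bound via the associated graded quantum plane), a step the paper leaves implicit when it invokes that lemma; your verification is sound.
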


\begin{proof}
Since $p\notin\sum{\mathcal{A}^2}$ and $\sum{\mathcal{A}^2}$ is closed by Lemma \ref{closcone1}, there exists a linear functional $\varphi:\mathcal{A}\rightarrow\mathbb{C}$ such that $\varphi(\sum{\mathcal{A}^2})\geq 0$ and $\varphi(p)<0$ by the Hahn-Banach Theorem. Let $(\pi_{\varphi},\mathcal{H}_{\varphi},\xi_{\varphi})$ be its GNS-construction (see \cite[Section 8.6.]{s}). Assume to the contrary, that that $\pi_\varphi$ has a well-behaved extension, say $\pi.$ Then $\langle \pi(p)\xi_{\varphi},\xi_{\varphi}\rangle=\langle \pi_{\varphi}(p)\xi_{\varphi},\xi_{\varphi}\rangle=\varphi(p)<0$. On the other hand, $\pi(p)\geq 0$ by Lemma \ref{lem_pospol_qccr}, a contradiction.
\end{proof}

\subsection{$C^*$-envelope of the $q$-oscillator algebra}
In this subsection we show that $\cA,$ considered with the category $\Rep\cA$ and generators $a,a^*$ has a $C^*$-envelope $\gA$ in the sense of Definition \ref{defn_envelope}. For let $(C_0(\cBp),\dZ,\beta)$ be the $C^*$-p.d.s. dual to $(\cBp,\dZ,\alpha).$ More precisely, define the partial action $\beta=(\set{I_k}_{k\in\dZ},\set{\beta_k}_{k\in\dZ})$ on $C_0(\cBp)$ by setting $I_k:=C_0(\cD_k)$ and
$$
(\beta_k(f))(t):=f(\alpha_{-k}(t))=f(F^k(t)),\ \mbox{for}\ f\in I_{-k},\ t\in\cD_k.
$$
Proposition \ref{prop_q_osc_cBp} implies that the $C^*$-p.d.s. $(C_0(\cBp),\dZ,\beta)$ is defined by the partial automorphism $\Theta=(\theta,I,J),$ where
$$
I=I_{-1},\ J=I_1=C_0(\cBp),\ (\theta(f))(t)=(\beta_1(f))(t)=f(1+qt),\ f\in I_{-1}.
$$
We define 
$$
\gA:=C^*(C_0(\cBp),\Theta)=C_0(\cBp)\times_\beta\dZ.
$$ 

\begin{thm}\label{thm_envelope_qccr}
Consider the $q$-oscillator algebra $\cA$ with generators $a,a^*$ and the category of well-behaved representations $\Rep\cA.$ Then $\gA$ is a $C^*$-envelope of $\cA.$
\end{thm}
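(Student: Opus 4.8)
The plan is to verify Definition~\ref{defn_envelope} directly, by exhibiting the affiliated element attached to the generator $a$ and showing that the induced assignment $\rho\mapsto\pi$ is an equivalence of categories. Let $u\in M(\gA)$ be the canonical partial isometry of the covariance algebra $C^*(C_0(\cBp),\Theta)$, and let $\widehat{N}\,\eta\,C_0(\cBp)\subseteq M(\gA)$ be the affiliated element given by the Gel'fand transform $\chi_t\mapsto t$; it is genuinely unbounded because $\cBp$ is non-compact by Proposition~\ref{prop_q_osc_cBp}. Put $A:=u\,\widehat{N}^{1/2}$ and let $A^*$ be its adjoint, so that formally $A^*A=\widehat{N}$ and $AA^*=u\widehat{N}u^*$. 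First I would check $A\,\eta\,\gA$ via Woronowicz's criteria (density of $\cD(A^*)$ and of the range of $\Un{}+A^*A$), which is routine since $C_0(\cBp)$ is abelian and $\widehat{N}\geq 0$.

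For the functor $\rho\mapsto\pi$: given a well-behaved $\rho$, Definition~\ref{definition well-behaved representation} furnishes a spectral measure $E_\rho$ on $\cBp$, hence a non-degenerate representation $\pi_0$ of $C_0(\cBp)$ by $\pi_0(f)=\int f\,dE_\rho$; let $\overline{\rho(a)}=u_1c_1$ be the polar decomposition. I would observe that relation~\eqref{eq_defn_well_beh_u} with $g=1$ is exactly the covariance identity $\pi_0(\theta(f))=u_1\pi_0(f)u_1^*$, since $\alpha_{-1}(t)=F(t)=1+qt$ implements $\theta$, i.e. $\theta(f)(t)=f(1+qt)$. Moreover $c_1=\overline{\rho(N)}^{1/2}$ vanishes exactly on $E_\rho(\set{0})$, so the initial space of $u_1$ is $E_\rho(\cD_{-1})\cH=\overline{\pi_0(I)\cH}$ (using $\cD_{-1}=\cBp\setminus\set{0}$ from Proposition~\ref{prop_q_osc_cBp}); and $aa^*=\Un{}+qa^*a$ gives $\overline{\rho(a)}\,\overline{\rho(a)}^*=\Un{}+q\overline{\rho(N)}\geq\Un{}$, so $u_1$ is a coisometry, matching the final space $\overline{\pi_0(J)\cH}=\cH$ (as $J=C_0(\cBp)$). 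Thus $(\pi_0,u_1)$ is a covariant representation and integrates, by \cite[Section~5]{Ex}, to $\pi:=\pi_0\times u_1\in\Rep\gA$; extending $\pi$ to affiliated elements gives $\pi(A)=u_1\pi_0(\widehat{N}^{1/2})=u_1c_1=\overline{\rho(a)}$.

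For the inverse: given $\pi\in\Rep\gA$, restrict to $C_0(\cBp)$ to recover a spectral measure $E$ and the partial isometry $u_\pi=\pi(u)$, and define $\rho$ by $\overline{\rho(a)}:=\pi(A)$, $\overline{\rho(a^*)}:=\pi(A^*)$. The only relation to verify is $AA^*=\Un{}+qA^*A$: since $\widehat{N}^{1/2}$ vanishes exactly at $t=0$ and $\cD_{-1}=\set{\widehat{N}\neq 0}$, we get $A^*A=\widehat{N}$; and since $J=C_0(\cBp)$ forces $u$ to be a coisometry while the covariance extends to the affiliated $\widehat{N}$ as $u\widehat{N}u^*=\theta(\widehat{N})$ with $\theta(\widehat{N})(t)=\widehat{N}(1+qt)=\Un{}+q\,t$, we obtain $AA^*=\Un{}+q\widehat{N}=\Un{}+qA^*A$. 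Hence $\rho$ is a $*$-representation of $\cA$, and it is well-behaved: its spectral measure $E$ is supported on $\cBp$, so Condition~$(i)$ of Definition~\ref{definition well-behaved representation} holds by Proposition~\ref{prop_properties}$(iii)$ (as $\cB=\dC[N]$ is singly generated and $\overline{\rho(N)}$ is self-adjoint), while Condition~$(ii)$ is the covariance relation for $(\pi,u_\pi)$.

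Finally I would check that both assignments are functorial and mutually quasi-inverse: a bounded intertwiner of well-behaved representations commutes with $\overline{\rho(N)}$, hence with the spectral measures, and intertwines the polar parts, so it intertwines the associated $\gA$-representations, and conversely; the two composites are naturally isomorphic to the identities. This yields the asserted equivalence of categories, so $\gA$ is a $C^*$-envelope of $\cA$. I expect the main obstacle to be the affiliated-element bookkeeping: proving $A\,\eta\,\gA$ and verifying that the canonical extension of an $\gA$-representation to affiliated operators reproduces exactly the closed operator $\overline{\rho(a)}$, including the matching of cores, rather than the algebraic relation, which the computation of $A^*A$ and $AA^*$ above settles cleanly.
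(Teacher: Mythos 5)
Your proposal is correct and follows essentially the same route as the paper: the paper likewise exhibits $A$ with $A^*A=\widehat{N}$ (written there by explicit formulas $A(f\otimes k)=\sqrt{1+qt}\,f(1+qt)\otimes(k+1)$ on a dense subspace $\gA_0\subseteq\gA$, which is exactly your $u\widehat{N}^{1/2}$), invokes Exel's bijection between $\Rep\gA$ and covariant pairs, and passes from a well-behaved $\rho$ to the covariant pair via the polar decomposition $\overline{\rho(a)}=u_1c_1$ together with \eqref{eq_defn_well_beh_u}, matching \eqref{eq_aux4} with \eqref{eq_aux6}. The only difference is packaging, not substance.
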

\begin{proof}
Let $\gA_0$ be the linear hull of 
$$
\set{f\otimes k\in\gA\mid k\in\dZ,\ \supp f\subseteq\cD_k\ \mbox{is compact}}.
$$ 
$\gA_0$ is obviously dense in $\gA.$ For $f\otimes k\in\gA_0$ define
\begin{gather*}
A(f(t)\otimes k)=\sqrt{1+qt}f(1+qt)\otimes (k+1),
\end{gather*}
and
\begin{gather*}
A^*(f(t)\otimes k)=\sqrt t f(q^{-1}t-q^{-1})\otimes (k-1)
\end{gather*}
Then $A$ and $A^*$ are densely defined linear operators on $\gA$ and their closures, denoted again by $A$ and $A^*,$ are adjoint to each other. For $f\otimes k\in\gA_0$ we have
\begin{gather*}
A^*A(f(t)\otimes k)=A^*(\sqrt{\alpha_{-1}(t)}f(\alpha_{-1}(t))\otimes (k+1))=tf(t)\otimes k.
\end{gather*}

The last equation shows that the range of $I+A^*A$ is dense in $\gA_0\subseteq\gA,$ so that $A$ is affiliated with $\gA.$ By \cite[Theorem 1.4.]{wor1} the adjoint $A^*$ is also affiliated with $\gA.$ We show the correspondence \eqref{eq_correspondence_A_A} between the generators $a,a^*\in\cA$ and affiliated elements $A,A^*\,\eta\,\gA.$

By \cite[Theorem 5.6]{Ex} every $*$-representation of $\gA$ is given by a covariant representation $\pi\times u$ of $(C_0(\cBp),\Theta).$ Here $\pi:C_0(\cBp)\to B(\cH_\pi)$ is a $*$-representation of $C_0(\cBp)$ and $u$ is a partial isometry on $\cH_\pi$ satisfying $\pi(\theta(b))=u\pi(b)u^*$ for every $b\in I.$ By the spectral theory of commutative $C^*$-algebras, there exists a unique spectral measure $E_\pi$ on $\cBp$ such that
$$
\pi(f)=\int_\cBp f(t)dE_\pi(t),\ f\in C_0(\cBp).
$$
By definition of $\theta$ for $f\in I_{-1}=C_0(\cD_{-1})$ we have 
$$
u\left(\int fdE_\pi\right)u^*=u\pi(f)u^*=\pi(\theta(f))=\int f(1+qt)dE_\pi(t).
$$
Multiplying the latter by $u$ from the right and remembering that the initial space of $u$ is $\overline{\pi(I_{-1})\cH_\pi}=E_\pi(\cD_{-1})\cH_\pi$ we get
\begin{gather}\label{aux1}
u\int fdE_\pi=\int f(1+qt)dE_\pi(t)\cdot u,\ \mbox{for}\ f\in I_{-1}.
\end{gather}
The extension of $(\pi\times u)$ to $A$ and $A^*$ is given by
\begin{gather}\label{eq_aux4}
(\pi\times u)(A)=u\int\sqrt{t}dE_\pi,\ (\pi\times u)(A^*)=\int\sqrt tdE_\pi(t)\cdot u^*
\end{gather}
Indeed, for every $f\otimes k\in\gA_0$ we have
\begin{gather*}
u\int\sqrt{t}dE_\pi\cdot ((\pi\times u)(f\otimes k))=u\int\sqrt{t}dE_\pi\cdot \int fdE_\pi\cdot u^k=\\
=u\int\sqrt{t}f(t)dE_\pi\cdot u^k=\int\sqrt{1+qt}f(1+qt)dE_\pi\cdot u^{k+1}=(\pi\times u)(A(f\otimes k)).
\end{gather*}
Since $\gA_0\subseteq\gA$ is a core of $A,\ \pi(\gA_0)\cH_\pi$ is a core of $A,$ and we get the first part of \eqref{eq_aux4}. The second part follows from $(\pi\times u)(A^*)=((\pi\times u)(A))^*.$

Let $\rho$ be a well-behaved $*$-representation of $\cA,$ and $E_\rho$ be the corresponding spectral measure on $\cBp\subseteq\dR_+.$ Further, let $\overline{\rho(a)}=u_1 c_1$ be the polar decomposition of $\overline{\rho(a)}.$ Since $a^*a$ is the generator of $\cB,\ E_\rho$ coincides with the spectral measure of $\overline{\rho(a^*a)}=\rho(a)^*\overline{\rho(a)}.$ Hence 
\begin{gather}\label{eq_aux6}
\overline{\rho(a)}=u_1\int\sqrt tdE_\rho,\ \mbox{and}\ \overline{\rho(a^*)}=\int\sqrt tdE_\rho\cdot u_1^*.
\end{gather}
Since $\ker u_1=\ker c_1,$ the initial space of $u_1$ is the range of $E_\rho(\cBp\setminus\set{0})=E_\rho(\cD_{-1}).$ Further, we have $u_1^{}c_1^2u_1^*=1+qc_1^2,$ which implies that $\ker u_1^*$ is trivial, so that the final space of $u_1$ is $E_\rho(\cBp)=E_\rho(\cD_1).$ Applying \eqref{eq_defn_well_beh_u} to $f\in C_0(\cD_{-1})$ we obtain
\begin{gather*}
u_1\rho(f)u_1^*=u_1^{}\int f(t)dE_\rho(t)\cdot u_1^*=\int f(\alpha_{-1}(t))dE_\rho(t)\cdot u_1^*u_1^{}=\\
=\int f(1+qt)dE_\rho(t)=\rho(\theta(f)),
\end{gather*}
i.e. $(\rho_\cB\times u_1),$ where $\rho_\cB$ is the restriction of $\rho$ to $\cB,$ defines a covariant representation of $(C_0(\cBp),\Theta).$ 

The correspondence \eqref{eq_correspondence_A_A} between $\pi$ and $\rho$ follows now by comparing \eqref{eq_aux4} with \eqref{eq_aux6}.
\end{proof}

\noindent \textit{Remark.} In \cite[Section 3]{wor2} the author shows that the operators $p,q$ of the Weyl algebra $W_1$ generate a $C^*$-algebra $\gA$ in the sense of the Definition 3.1 therein, and that $\gA$ is the algebra of compact operators. It corresponds to the fact that the $C^*$-envelope of $q$-CCR with $q=1$ is isomorphic to the partial crossed product $C_0(\dN_0)\times_\alpha\dZ\simeq K(l^2(N_0)).$

\section{The Podle\'s Sphere}\label{sec: Podles sphere}
In this section we investigate $*$-representations of the Podles' sphere $\mathcal{O}(S_{qr}^2).$ We consider only the case $q\in(0,1),\ r\in(0,\infty).$ The cases $r=0,\ r=\infty$ can be treated similarly. Recall \cite{Pd} that $\cA:=\mathcal{O}(S_{qr}^2)$ is the unital $\ast$-algebra generated by $a=a^{\ast},b,b^{\ast}$ and defining relations
\begin{align}
	ab=q^{-2}ba,\; ab^{\ast}=q^2b^{\ast}a,\; b^{\ast}b=a-a^2+r\Un{},\; bb^{\ast}=q^2a-q^4a^2+r\Un{}.
	\label{eq_podles}
\end{align}
The defining relations imply that every $*$-representation of $\mathcal{A}$ is bounded and hence well-behaved by Proposition \ref{prop_properties} (i). In \cite{Pd} the following irreducible $*$-representations of $\cA$ were obtained.

\begin{compactitem}
 \item  Two infinite-dimensional $*$-representations $\pi_{\pm}$ which act on an orthonormal base $\{\gee_k\}_{k\in\mathbb{N}_0}$ of the representation space $\mathcal{H}_{\pm}$ by
\begin{align*}
	\pi_{\pm}(a)\gee_k=q^{2k}\lambda_{\pm}\gee_k,\ \pi_{\pm}(b)\gee_k=\left(q^{2k}\lambda_{\pm}-(q^{2k}\lambda_{\pm})^2+r\right)^{1/2}\gee_{k-1},\\
	\pi_{\pm}(b^*)\gee_k=\left(q^{2(k+1)}\lambda_{\pm}-(q^{2(k+1)}\lambda_{\pm})^2+r\right)^{1/2}\gee_{k+1},\ \gee_{-1}:=0,
\end{align*}
where $\lambda_{\pm}:=\frac{1}{2}\pm(r+\frac{1}{4})^{1/2}$.
\item The series of one-dimensional $\ast$-representations $\pi_{\varphi},\ \varphi\in[0,2\pi)$,
\begin{align*}
	\pi_{\varphi}(a)=0,\ \pi_{\varphi}(b)=e^{\textnormal{i}\varphi}r^{1/2},\ \pi_{\varphi}(b^{\ast})=e^{-\textnormal{i}\varphi}r^{1/2}.
\end{align*}
\end{compactitem}
Using \eqref{eq_podles} and induction on $n\in\dN$ we obtain the following relations
\begin{align}
\begin{split}
	ab^n&=q^{-2n}b^na,\
	ab^{\ast n}=q^{2n}b^{\ast n}a.\\
	b^{\ast n}b^n&=\prod_{j=1}^{n}{\left(q^{-2(j-1)}a-q^{-4(j-1)}a^2+r\right)},\
	b^nb^{\ast n}=\prod_{j=1}^n{\left(q^{2j}a-q^{4j}a^2+r\right)}.
	\label{eq_podles_pol_rel}
\end{split}
\end{align}
Define a $\dZ$-grading on $\mathcal{A}$ by setting $a\in\cA_0,\; b\in\mathcal{A}_1$ and $b^{\ast}\in\mathcal{A}_{-1}$. Using the defining relations one easily derives
\begin{align*}
	\cB:=\cA_0=\textnormal{Lin}\{a^lb^{\ast m}b^m\mid l,m\in\mathbb{N}_0\},\quad
	\mathcal{A}_n:=b^n\mathcal{B},\quad
	\mathcal{A}_{-n}:=b^{\ast n}\mathcal{B},
	\end{align*}
where $n\in\mathbb{N}_0$. Further, relations \eqref{eq_podles_pol_rel} imply that $\cB=\dC[a],$ hence $\cBd=\set{\chi_t\mid t\in\dR}$ where $\chi_t(a)=t.$ As in the previous section we identify $\cBd$ with $\mathbb{R}$.

\begin{prop}\label{prop_cBp_alpha_podles}
	\begin{itemize}
		\item [(i)] $\cBp=\set{\chi_{m,+}}_{m\in\dN_0}\cup\set{\chi_{m,-}}_{m\in\dN_0}\cup\set{\chi_\infty},$\\
where $\chi_{m,\pm}$ denotes $\chi_t,\ t=q^{2m}\lambda_{\pm}$ and $\chi_\infty$ denotes $\chi_t,\ t=0.$
		\item [(ii)] 	The partial action $\alpha=\left( \{\mathcal{D}_n\}_{n\in\mathbb{Z}},\{\alpha_n\}_{n\in\mathbb{Z}} \right)$ is given as follows:
	\begin{gather*}
		\mathcal{D}_{-n}=\set{\chi_{m,\pm}\mid m\geq n}\cup\set{\chi_\infty}, \mbox{ and } \chi_{m,\pm}^n=\chi_{m-n,\pm},\ \chi_\infty^n=\chi_\infty.
	\end{gather*}
	\end{itemize}
\end{prop}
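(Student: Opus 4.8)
The plan is to mirror the proof of Proposition \ref{prop_q_osc_cBp}, using $\cB=\dC[a]$ together with the explicit products in \eqref{eq_podles_pol_rel}. For part (i) I would first apply Lemma \ref{lem_poschar}: since $\cA_n=b^n\cB$ and $\cA_{-n}=b^{*n}\cB$, a character $\chi_t$ lies in $\cBp$ if and only if $\chi_t(b^{*n}b^n)\ge 0$ and $\chi_t(b^nb^{*n})\ge 0$ for all $n\in\dN$. The decisive observation is that every factor appearing in \eqref{eq_podles_pol_rel} has the shape $\Phi(q^{2k}t)$ for a suitable $k\in\dZ$, where $\Phi(s):=r+s-s^2$; indeed $q^{-2(j-1)}t-q^{-4(j-1)}t^2+r=\Phi(q^{-2(j-1)}t)$, and likewise for $b^nb^{*n}$. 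Since the roots of $\Phi$ are exactly $\lambda_\pm=\tfrac12\pm(r+\tfrac14)^{1/2}$, one has $\Phi(s)=-(s-\lambda_+)(s-\lambda_-)$ with $\lambda_-<0<\lambda_+$, so $\Phi(s)\ge 0$ precisely on $[\lambda_-,\lambda_+]$.

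Writing $a_k:=\Phi(q^{2k}t)$, the two families of inequalities read: all partial products $a_0a_{-1}\cdots a_{-(n-1)}\ge 0$ (coming from $b^{*n}b^n$) and all $a_1a_2\cdots a_n\ge 0$ (from $b^nb^{*n}$). The heart of the argument is that for $q\in(0,1)$ the sequence $k\mapsto q^{2k}t$ is strictly monotone, so $a_k$ changes sign at most once as $k$ runs through $\dZ$: for $t>0$ it is positive for large $k$ (as $q^{2k}t\to 0$ and $\Phi(0)=r>0$) and negative for $k\to-\infty$, the transition being governed solely by whether $q^{2k}t$ crosses $\lambda_+$. Consequently the negative factors, if any, occur consecutively at the low-index end. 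The hard part will be to conclude from this that all the partial products can be nonnegative only if the unique sign change sits exactly at an index where $a_k$ vanishes: otherwise the first partial product of the first family that reaches past the crossing isolates a single negative factor and is strictly negative, with no chance of an even cancellation. Vanishing at an integer index forces $q^{2k}t=\lambda_+$, i.e.\ $t=q^{2m}\lambda_+$ for some $m\in\dN_0$; the symmetric analysis for $t<0$ (crossing of $\lambda_-$) gives $t=q^{2m}\lambda_-$, while $t=0$ yields $a_k=r>0$ for all $k$, i.e.\ $\chi_\infty\in\cBp$. This exhausts $\cBp$ and establishes (i).

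For part (ii) the computations are routine and parallel the $q$-oscillator case. Taking $a_n=b^n$ as the distinguished generator of $\cA_n$, the domain $\cD_{-n}$ consists of the $\chi_t\in\cBp$ with $\chi_t(b^{*n}b^n)\ne 0$; by the factorization this product vanishes exactly when one of its arguments equals $\lambda_\pm$, which for $\chi_{m,\pm}$ occurs at the step $j=m+1$, so $\chi_{m,\pm}\in\cD_{-n}$ iff $m\ge n$, whereas $\chi_\infty(b^{*n}b^n)=r^n\ne 0$ always. For the action, since $\cB=\dC[a]$ it suffices to evaluate $\alpha_n(\chi_t)$ on the generator $a$; using $ab^n=q^{-2n}b^na$ from \eqref{eq_podles_pol_rel} one gets $\chi_t(b^{*n}ab^n)=q^{-2n}\chi_t(b^{*n}b^n)\chi_t(a)$, hence $\alpha_n(\chi_t)(a)=q^{-2n}t$, i.e.\ $\alpha_n(\chi_t)=\chi_{q^{-2n}t}$. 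Specializing this to the three types of characters gives $\chi_{m,\pm}^n=\chi_{m-n,\pm}$ and $\chi_\infty^n=\chi_\infty$, as claimed.
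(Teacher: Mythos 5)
Your proof is correct and takes essentially the same route as the paper: Lemma \ref{lem_poschar} plus the factorizations \eqref{eq_podles_pol_rel} to force one factor to vanish (hence $t=q^{2m}\lambda_{\pm}$ or $t=0$), and the commutation $ab^n=q^{-2n}b^na$ to compute $\alpha_n(\chi_t)(a)=q^{-2n}t$. Your monotonicity/sign-change argument simply spells out the step the paper leaves implicit, namely that the first non-positive factor in the partial products must actually be zero.
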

\begin{proof} $(i)$ Lemma \ref{lem_poschar} and relations \eqref{eq_podles_pol_rel} imply that $\chi_t,\ t\in\dR$ belongs to $\cBp$ if and only if the following inequalities are satisfied for all $n\in\mathbb{N}$:
	\begin{align}\label{Podles sphere: 1}
	\begin{split}
		\chi_t(b^{\ast n}b^n)&=\prod_{k=0}^{n-1}(q^{-2k}t-q^{-4k}t^2+r)\geq 0,\\
		\chi_t(b^nb^{\ast n})&=\prod_{k=1}^n(q^{2k}t-q^{4k}t^2+r)\geq 0.
	\end{split}
	\end{align}
	Assume $q^{-2k}t-q^{-4k}t^2+r>0$ for all $k\in\dN_0.$ Since $q^{-2k}\rightarrow+\infty,\ k\to+\infty,$ it is possible only if $t=0,$ i.e. $\chi_t=\chi_\infty.$ If $t\neq 0$ we get $q^{-2k}t-q^{-4k}t^2+r=0$ for some $k\in\dN_0,$ whence
	\begin{align*}
		t=\frac{-q^{-2k}\pm\sqrt{q^{-4k}+4q^{-4k}r}}{-2q^{-4k}}=q^{2k}\lambda_{\mp}.
	\end{align*}
  One can easily check that every $t=q^{2k}\lambda_{\pm},\ k\in\dN_0,$ satisfies \eqref{Podles sphere: 1}.

$(ii)$ 	Relations \eqref{Podles sphere: 1} imply that $\mathcal{D}_{-n}=\set{\chi_{m,\pm}\mid m\geq n}\cup\set{\chi_\infty}$. Assume that $\chi_{m,\pm}\in\mathcal{D}_{-n}$, where $n\in\mathbb{N}_0$. Using relations \eqref{eq_podles_pol_rel} we obtain
	\begin{align*}
		\chi_{m,\pm}^n(a)&=\frac{\chi_{m,\pm}(b^{\ast n}ab^n)}{\chi_{m,\pm}(b^{\ast n}b^n)}
		=\frac{\chi_{m,\pm}(b^{\ast n}b^n)\chi_{m,\pm}(q^{-2n}a)}{\chi_{m,\pm}(b^{\ast n}b^n)}
		=\chi_{m-n,\pm}(a).
	\end{align*}
	For $\chi_\infty$ we have $\chi_\infty(b^{\ast n}b^n)=\chi_\infty(b^nb^{\ast n})=r^n\neq 0$ for all $n\in\mathbb{Z}$ by equations \eqref{Podles sphere: 1}. Hence $\chi_\infty\in\cD_n$ for all $n\in\mathbb{Z}$ and $\chi_\infty^n(a)=0$.
\end{proof}

Let $\Gamma$ be the subset $\{\chi_{0,+},\chi_{0,-},\chi_\infty\}\subseteq\cBp.$ Obviously $\Gamma$ is a measurable countably separated section of the p. d. s. $(\cBp,\dZ,\alpha).$ We calculate all irreducible \sloppy $*$-re\-pre\-sen\-ta\-ti\-ons associated with $\Orb\chi,\ \chi\in\Gamma.$

\begin{enumerate}
 \item [$(i)$] Case $\chi_{0,\pm}$. The stabilizer of $\chi_{0,\pm}$ is trivial by Proposition \ref{prop_cBp_alpha_podles}, (ii). Put $\pi_\pm:=\Ind\chi_{0,\pm}$. We use Proposition \ref{orthonormal base of representation space} to compute the action of $\pi_\pm$ on the orthonormal base $\{e_{-k}\}_{k\in\dN_0}$.
\begin{align*}
	\pi_\pm(b)e_{-k}&=\left(q^{2k}\chi_{0,\pm}\left(a \right)-q^{4k}\chi_{0,\pm}\left( a^2 \right)+r\right)^{1/2}e_{-k+1}\\
	&=\left(q^{2k}\lambda_{\pm} - \left(q^{2k}\lambda_{\pm} \right)^2+r\right)^{1/2}e_{-k+1},\\
	\pi_\pm(b^{\ast})e_{-k}&=\left(q^{2(k+1)}\lambda_{\pm} - \left(q^{2(k+1)}\lambda_{\pm}\right)^2+r\right)^{1/2}e_{-k-1},\\
	\pi_\pm(a)e_{-k}&=\chi_{0,\pm}^{-k}(a)=q^{2k}\lambda_{\pm}e_{-k}.
\end{align*}
\item [$(ii)$] Case $\chi_\infty.$ The stabilizer group $H$ of $\chi$ is $\mathbb{Z}.$ Let $\rho$ be an irreducible $*$-representation of $\cA_{H}$ satisfying \eqref{Res multiple character}. Since $\chi(bb^{\ast}-b^{\ast}b)=0$, we have $\rho(b)\rho(b^{\ast})=\rho(b^{\ast})\rho(b).$ By Schur's Lemma $\rho$ is one-dimensional. For $\lambda\in\dC$ such that $\lambda=\rho(b)$ we get $\left|\lambda\right|^2 =\rho(bb^{\ast})=r.$  Hence $\rho=\rho_\varphi,$ for some $\varphi\in[0,2\pi),$ where $\rho_\varphi(b)=e^{\textnormal{i}\varphi}r^{1/2}.$ Since $\cA_H=\cA,\ \pi_\varphi:=\Ind_{\cA_H\uparrow\mathcal{A}}\rho_\varphi$ is equivalent to $\rho_\varphi$ and we get
\begin{align*}
\pi_{\varphi}(a)=0,\ \pi_\varphi(b)=e^{\img\varphi}r^{1/2},\; \pi_\varphi(b^{\ast})=e^{-\img\varphi}r^{1/2},\ \varphi\in [0,2\pi).
\end{align*}

\end{enumerate}

By Theorem \ref{thm_all_induced} these are all, up to unitary equivalence, irreducible $\ast$-representations of $\cA.$ Setting $\gee_k:=e_{-k},\ k\in\mathbb{N}_0,$ we see that these coincide with the ones found in \cite{Pd}. In particular, we have the following

\begin{thm}
Every irreducible $*$-representation of the Podle\'s sphere $\mathcal{O}(S_{qr}^2),\ q\in(0,1),\ r\in(0,\infty)$ is induced from a one-dimensional $*$-representation.
\end{thm}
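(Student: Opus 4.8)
The plan is to read the theorem off directly from the case analysis carried out above, combined with the two structural results of Section~1. First I would record the observation already made from the defining relations \eqref{eq_podles}: every $*$-representation of $\cA=\cO(S_{qr}^2)$ is bounded, hence well-behaved by Proposition~\ref{prop_properties}(i). Consequently the class of irreducible $*$-representations coincides with the class of irreducible \emph{well-behaved} ones, and the full machinery of the orbit method applies without any domain considerations. Since $\cB=\dC[a]$ is singly, hence countably, generated and the section $\Gamma=\set{\chi_{0,+},\chi_{0,-},\chi_\infty}$ exhibited above is measurable and countably separated, Proposition~\ref{prop_properties}(ii) guarantees that every irreducible $*$-representation of $\cA$ is associated with an orbit $\Orb\chi$ for a unique $\chi\in\Gamma$.

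Next I would invoke Theorem~\ref{thm_all_induced}. For each $\chi\in\Gamma$ with stabilizer $H=\St\chi$, every irreducible well-behaved representation associated with $\Orb\chi$ is of the form $\Ind_{\cA_H\uparrow\cA}\rho$ for an irreducible $*$-representation $\rho$ of $\cA_H$ whose restriction to $\cB$ is a multiple of $\chi$, as in \eqref{Res multiple character}. Thus the entire theorem reduces to the single assertion that, in each of the three cases of $\Gamma$, the inducing representation $\rho$ is one-dimensional.

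The verification then splits along $\Gamma$. For $\chi=\chi_{0,\pm}$ the stabilizer is trivial by Proposition~\ref{prop_cBp_alpha_podles}(ii), so $\cA_H=\cA_e=\cB=\dC[a]$; here $\rho$ is simply a representation of $\cB$ whose restriction to $\cB$ (that is, itself) is a multiple of the character $\chi_{0,\pm}$, and irreducibility forces $\rho=\chi_{0,\pm}$, which is one-dimensional. For $\chi=\chi_\infty$ the stabilizer is all of $\dZ$, so $\cA_H=\cA$; since $\chi_\infty(bb^*-b^*b)=0$, condition \eqref{Res multiple character} yields $\rho(b)\rho(b^*)=\rho(b^*)\rho(b)$, so $\rho(b)$ is a bounded normal operator commuting with the whole representation, and Schur's Lemma forces $\rho$ to be one-dimensional with $\abs{\rho(b)}^2=r$. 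In every case $\rho$ is one-dimensional, which is precisely the claim; the concrete formulas for $\pi_\pm$ and $\pi_\varphi$ displayed above, obtained via Proposition~\ref{orthonormal base of representation space}, already realize these induced representations explicitly.

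The main—indeed essentially the only nontrivial—obstacle will be the Schur-type argument in the $\chi_\infty$ case, where one must genuinely exploit the collapse of the commutator $bb^*-b^*b$ under the character to conclude normality and hence one-dimensionality. Everything else is bookkeeping that combines Propositions~\ref{prop_properties} and \ref{prop_cBp_alpha_podles} with Theorem~\ref{thm_all_induced}, and I anticipate no real difficulty, since boundedness of all representations removes the domain and integrability subtleties that usually accompany the orbit method.
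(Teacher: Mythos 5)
Your proposal is correct and follows essentially the same route as the paper: boundedness of all $*$-representations (hence well-behavedness), the measurable countably separated section $\Gamma=\set{\chi_{0,+},\chi_{0,-},\chi_\infty}$ feeding into Theorem~\ref{thm_all_induced}, trivial stabilizers for $\chi_{0,\pm}$ giving induction from the character itself, and the Schur-type argument from $\chi_\infty(bb^*-b^*b)=0$ forcing the inducing representation at $\chi_\infty$ to be one-dimensional with $\abs{\rho(b)}^2=r$. No gaps.
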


In the remaining part of this section we describe the enveloping $C^*$-algebra of $\mathcal{O}(S_{qr}^2).$ For let  $(C_0(\cBp),\dZ,\beta)$ be the $C^*$-p.d.s. dual to $(\cBp,\dZ,\alpha)$ as defined in the Subsection \ref{subsec_part_act}. Note, that the sets $\cD_k,\ k\in\dZ,$ are compact, hence $I_k:=C(D_k).$ By definition of $\beta$ we have
\begin{gather}\label{eq_beta_Podles}
(\beta_k(f))(t)=f(\alpha_{-k}(t))=f(q^{2k}t),\ f\in I_{-k},k\in\dZ.
\end{gather}
It is easily seen from the description of $\alpha$ in the Proposition \ref{prop_cBp_alpha_podles} that the partial action $\beta=(\set{I_k}_{k\in\dZ},\set{\beta_k}_{k\in\dZ})$ is defined by the partial automorphism $\Theta=(\theta,I,J),$ where $\theta=\beta_1,\ I=I_{-1},\ J=I_1=\cA.$ 

\begin{thm}
The enveloping $C^*$-algebra $\gA$ of $\cA$ is isomorphic to the covariance algebra $C^*(C(\cBp),\Theta)\simeq C(\cBp)\times_\beta\dZ$ 
\end{thm}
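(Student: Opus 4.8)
The plan is to follow the proof of Theorem~\ref{thm_envelope_qccr} almost verbatim, the essential simplification being that here \emph{every} $*$-representation of $\cA$ is bounded (Proposition~\ref{prop_properties}(i)), so $\gA=C^*_{env}(\cA)$ exists with norm $\norm{x}_\gA=\sup_\sigma\norm{\sigma(x)}$ over all representations $\sigma$, and the generators $a,b,b^*$ correspond to honest elements of the covariance algebra rather than merely to affiliated operators. Write $\gB=C(\cBp)$ and recall from Proposition~\ref{prop_cBp_alpha_podles} that $\cBp$ is compact, so every $\cD_k$ is compact and $I_k=C(\cD_k)$. Inside $\gB\times_\beta\dZ=C^*(\gB,\Theta)$ I put $A:=\widehat a\otimes 0$ and $B:=h\otimes 1$ with $h:=(\widehat{bb^*})^{1/2}\in I_1=C(\cD_1)$, so that $B^*=\beta_{-1}(h)\otimes(-1)$. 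A direct computation with the product and involution rules of Subsection~\ref{subsec_part_act}, using $\beta_1(f)(t)=f(q^2t)$ from \eqref{eq_beta_Podles}, shows that $A,B,B^*$ satisfy the defining relations \eqref{eq_podles}: the degree bookkeeping together with the covariance of $\beta_1$ gives $AB=q^{-2}BA$ and $AB^*=q^2B^*A$, while $B^*B$ and $BB^*$ collapse to the degree-zero functions $\widehat{b^*b}=\widehat a-\widehat a^2+r$ and $\widehat{bb^*}=q^2\widehat a-q^4\widehat a^2+r$. Hence $a\mapsto A,\ b\mapsto B,\ b^*\mapsto B^*$ extends to a $*$-homomorphism $j\colon\cA\to\gB\times_\beta\dZ$; since each representation of $\gB\times_\beta\dZ$ pulls back along $j$ to a representation of $\cA$, the map $j$ is contractive for the universal $C^*$-norm and extends to a $*$-homomorphism $\Phi\colon\gA\to\gB\times_\beta\dZ$.

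Next I would set up the representation correspondence exactly as in Theorem~\ref{thm_envelope_qccr} to prove that $\Phi$ is isometric. By \cite[Theorem 5.6]{Ex} every $*$-representation of $\gB\times_\beta\dZ$ is a covariant representation $\pi\times u$ of $(\gB,\Theta)$, and composition with $\Phi$ turns it into a representation of $\cA$. Conversely, given a bounded (hence well-behaved) representation $\rho$ of $\cA$, let $E_\rho$ be the spectral measure of $\overline{\rho(a)}$ on $\cBp$ and let $\overline{\rho(b)}=u_1c_1$ be the polar decomposition; since $\rho(b^*b)=\rho(a)-\rho(a)^2+r$ one has $c_1=\int(\widehat{b^*b})^{1/2}\,dE_\rho$, the initial and final spaces of $u_1$ are $E_\rho(\cD_{-1})\cH_\rho$ and $E_\rho(\cD_1)\cH_\rho$, and the well-behavedness relation \eqref{eq_defn_well_beh_u} shows that $(\Res_\cB\rho,u_1)$ is a covariant representation of $(\gB,\Theta)$ from which $\rho$ is recovered through $\Phi$. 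As these two assignments are mutually inverse and respect intertwiners, for $x\in\gA$ one obtains $\norm{\Phi(x)}=\sup_\pi\norm{\pi(\Phi(x))}=\sup_\rho\norm{\rho(x)}=\norm{x}_\gA$, so $\Phi$ is injective.

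It remains to prove that $A,B,B^*$ generate $\gB\times_\beta\dZ$, and this is the step I expect to be the main obstacle. Since $a=a^*$ and the values $q^{2m}\lambda_+,\ q^{2m}\lambda_-,\ 0$ are pairwise distinct (as $\lambda_+>0>\lambda_-$), the function $\widehat a$ is real and injective on the compact space $\cBp$, so by Stone--Weierstrass $C^*(A)=\gB$ is the full degree-zero part. The crucial point for the degree-one part is that $\widehat{bb^*}$ is \emph{nowhere vanishing} on $\cD_1=\cBp$: indeed $\widehat{bb^*}(\chi_\infty)=r>0$ and $\widehat{bb^*}(\chi_{m,\pm})=\chi_{m+1,\pm}(b^*b)>0$ for all $m\geq 0$ by \eqref{Podles sphere: 1} and Proposition~\ref{prop_cBp_alpha_podles}. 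Although $\widehat{b^*b}$ vanishes at the top characters $\chi_{0,\pm}$ (which is what forces $b$ to annihilate the top vectors), its shift $\widehat{bb^*}=\beta_1(\widehat{b^*b})$ moves these zeros to $\chi_{-1,\pm}\notin\cBp$ and is hence strictly positive on $\cD_1$; this strict positivity is the heart of the matter, for otherwise one would recover only a proper ideal in each degree. Consequently $h$ is invertible in the unital algebra $C(\cD_1)$, so the products $(f\otimes 0)(h\otimes 1)=(fh)\otimes 1$, with $f$ ranging over $\gB$, exhaust $I_1\otimes 1$; the same argument with $B^*$, using that $\beta_{-1}(h)$ is nowhere vanishing on the compact set $\cD_{-1}$, exhausts $I_{-1}\otimes(-1)$. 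Since $\dZ$ is generated by $1$, the degree $0,\pm1$ parts generate $\gB\times_\beta\dZ$, so $\Phi$ is onto and therefore a $*$-isomorphism $\gA\cong C^*(C(\cBp),\Theta)=C(\cBp)\times_\beta\dZ$.
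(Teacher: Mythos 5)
Your proof is correct and follows essentially the same route as the paper: your $A$, $B$, $B^*$ are exactly the paper's $\epsilon(a)=t\otimes 0$, $\epsilon(b)=(q^2t-q^4t^2+r)^{1/2}\otimes 1$, $\epsilon(b^*)=(t-t^2+r)^{1/2}\otimes(-1)$, and the identification of representations of both algebras with covariant representations of $(C(\cBp),\Theta)$ is the same argument; you merely supply the generation/surjectivity details that the paper leaves implicit. One cosmetic point: since $\widehat{a}$ vanishes at $\chi_\infty$, Stone--Weierstrass gives $C^*(A,\Un{})=\gB$ rather than $C^*(A)=\gB$, but this is harmless because the unit $1\otimes 0=\epsilon(\Un{})$ (or alternatively $BB^*$, which is bounded below on $\cBp$) lies in the image.
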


\begin{proof}
The proof goes similarly to the proof of the Theorem \ref{thm_envelope_qccr} by replacing the $\eta$-relation with $\in$-relation. 

We first define $*$-homomorphism $\epsilon:\cA\to C(\cBp)\times_\beta\dZ$ by setting
\begin{gather*}
\epsilon(a)=t\otimes 0,\ \epsilon(b)=(q^2t-q^4t^2+r)^{1/2}\otimes 1,\ \epsilon(b^*)=(t-t^2+r)^{1/2}\otimes (-1).
\end{gather*}
Direct computations using \eqref{eq_beta_Podles} show that $\epsilon(a),\epsilon(b),\epsilon(b^*)$ satisfy the defining relations of $\mathcal{O}(S_{qr}^2),$ that is, $\epsilon$ is well-defined. Every representation $\pi$ of $\cA$ is bounded, hence well-behaved by Proposition \ref{prop_properties}. That is, $\pi$ gives rise to a covariant representation $\pi|_\cB\times u,$ where $u$ is the partial isometry in the polar decomposition $\pi(b)=uc.$ On the other hand, every $*$-representation of $\gA$ is given by a covariant representation of the partial automorphism $\Theta.$ This proves the correspondence \eqref{eq_correspondence_A_A} for the representations of $\cA$ and $\gA.$
\end{proof}

\section{The Quantum Algebra $\cU_q(su(2))$}

In this section $\cA$ is the $q$-deformed enveloping $*$-algebra $\cU_q(su(2)),\ q>0,\ q\neq 1,$ which is generated by $E,F,K,K^{-1}$ satisfying the following defining relations
\begin{gather*}
	KK^{-1}=K^{-1}K=\Un{},\quad KEK^{-1}=q^2E,\quad KFK^{-1}=q^{-2}F,\\
	[E,F]=EF-FE=\frac{K-K^{-1}}{q-q^{-1}},\\
	E^{\ast}=FK,\quad F^{\ast}=K^{-1}E,\quad K^{\ast}=K.
\end{gather*}
In this section, we use the standard notation $[n]\equiv[n]_q=\frac{q^n-q^{-n}}{q-q^{-1}}$, where $n\in\dZ$ and $q\neq 0$. Further $X^0$ denotes $\Un{}$ if $X$ is one of the four generators $E,F,K,K^{-1}$. 

In \cite{vaksman} the authors considered the family of irreducible $*$-representations $\set{\pi_{\omega,l}\mid\omega=\pm 1,\ l\in\frac{1}{2}\mathbb{N}_0}$ of $\mathcal{U}_q(su(2)).$ The representation $\pi_{\omega,l}$ acts on an orthonormal base $\{\gee_m\}_{m=-l,\ldots,l}$ of the representation space as follows:
	\begin{align}	
	\begin{split}
		\pi_{\omega,l}(K)\gee_m &=\omega q^{2m}\gee_m,\\
		\pi_{\omega,l}(E)\gee_m &=q^{m+1}\sqrt{[l-m][l+m+1]}\gee_{m+1},\\
		\pi_{\omega,l}(F)\gee_m &=\omega q^{-m}\sqrt{[l+m][l-m+1]}\gee_{m-1}.
		\label{eq_formulas}
	\end{split}
	\end{align}
We will show that every irreducible well-behaved representation of $\cA$ is unitarily equivalent to $\pi_{\omega,l}$ for some $l\in\frac{1}{2}\dN_0,\ \omega=\pm 1.$ 

\medskip
Define a $\mathbb{Z}$-grading of $\cA$ by setting $E\in\mathcal{A}_1,\; F\in\mathcal{A}_{-1}$ and $K,K^{-1}\in\mathcal{A}_0$. Then
\begin{align*}
	\mathcal{B}:=\mathcal{A}_0=\textnormal{Lin}\{F^lK^mE^l\mid l\in\mathbb{N}_0,\;m\in\mathbb{Z}\}=\textnormal{Lin}\{E^lK^mF^l\mid l\in\mathbb{N}_0,\;m\in\mathbb{Z}\}.
\end{align*}
The $\ast$-subalgebra $\mathcal{B}\subseteq\cA$ is commutative and is equal to $\mathbb{C}[EF,K,K^{-1}]=\mathbb{C}[C_q,K,K^{-1}]$. For $n\in\mathbb{N}_0$, we have
\begin{align*}
	\mathcal{A}_n=E^n\mathcal{B}=\textnormal{Lin}\{E^{n+l}K^mF^l\mid l\in\mathbb{N}_0,\;m\in\mathbb{Z}\},\\
	\mathcal{A}_{-n}=F^n\mathcal{B}=\textnormal{Lin}\{F^{n+l}K^mE^l\mid l\in\mathbb{N}_0,\;m\in\mathbb{Z}\}.
\end{align*}
One can verify by a direct computation that the \textit{quantum Casimir element} $C_q$ is a central element in $\mathcal{A}$, where
\begin{align*}
	C_q=EF+\frac{q^{-1}K+qK^{-1}}{(q-q^{-1})^2}.
\end{align*}

The following lemma can be easily proved by induction.
\begin{lem}
For every $n\in\dN$ we have
\begin{align*}
		(i)\ [E,F^n]&\equiv EF^n-F^nE=[n]F^{n-1}[K;1-n],
		\\
		(ii)\ [E^n,F]&\equiv E^nF-FE^n=[n]E^{n-1}[K;n-1],
\end{align*}
where we set $[K;l]:=(q^lK-q^{-l}K^{-1})/(q-q^{-1})$ for $l\in\mathbb{Z}$.
	\label{quantum algebra: relations}
\end{lem}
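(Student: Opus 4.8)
The statement to prove is Lemma 4.1, namely the two commutator identities
$$[E,F^n]=[n]F^{n-1}[K;1-n],\qquad [E^n,F]=[n]E^{n-1}[K;n-1]$$
for all $n\in\dN$, where $[K;l]=(q^lK-q^{-l}K^{-1})/(q-q^{-1})$.

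The plan is to prove both identities by induction on $n$, using only the defining relations of $\cU_q(su(2))$. I will carry out $(i)$ in detail; $(ii)$ follows by an entirely symmetric argument (or by applying the involution). For the base case $n=1$ in $(i)$, I need $[E,F]=[1]F^0[K;0]$. Since $[1]=1$, $F^0=\Un{}$, and $[K;0]=(K-K^{-1})/(q-q^{-1})$, this is exactly the defining relation $[E,F]=(K-K^{-1})/(q-q^{-1})$, so the base case is immediate.

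For the inductive step I assume $[E,F^n]=[n]F^{n-1}[K;1-n]$ and compute $[E,F^{n+1}]=EF^{n+1}-F^{n+1}E$ by writing $F^{n+1}=F^n\cdot F$ and inserting the case $n=1$ together with the inductive hypothesis:
\begin{align*}
EF^{n+1}-F^{n+1}E &= (EF^n)F - F^{n+1}E\\
&= (F^nE+[n]F^{n-1}[K;1-n])F - F^{n+1}E\\
&= F^n(EF-FE)+[n]F^{n-1}[K;1-n]F.
\end{align*}
The first term is $F^n[K;0]$, and the task is to combine it with the second. The main obstacle, and really the only nontrivial computation, is to rewrite $[K;1-n]F$ so that $F$ moves to the left, i.e. to use $KFK^{-1}=q^{-2}F$ in the form $K^{\pm 1}F=q^{\mp 2}FK^{\pm 1}$. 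This gives $[K;1-n]F=\bigl(q^{1-n}KF-q^{-(1-n)}K^{-1}F\bigr)/(q-q^{-1})=F\bigl(q^{1-n}q^{-2}K-q^{n-1}q^{2}K^{-1}\bigr)/(q-q^{-1})=F[K;-1-n]$ (up to the precise shift in the index, which I will track carefully). After factoring out $F^n$ on the left, the bracket reduces to a linear combination of $K$ and $K^{-1}$; collecting the coefficients and using the identity $[n]q^{-1}+q^{-n}=[n+1]$ (and the companion relation for the $K^{-1}$ coefficient) should produce exactly $[n+1][K;-n]=[n+1][K;1-(n+1)]$, completing the induction.

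The only place requiring care is this bookkeeping of $q$-powers: matching the coefficients of $K$ and of $K^{-1}$ separately and verifying that both collapse into the single $q$-integer $[n+1]$ times the correct shifted bracket $[K;1-(n+1)]$. I expect this to follow from the elementary identity $q^{-1}[n]+q^{-n}=[n+1]$ (equivalently $q[n]+q^n=[n+1]$ for the other coefficient), which is a direct consequence of the definition $[n]=(q^n-q^{-n})/(q-q^{-1})$. Identity $(ii)$ is proved in the same manner, writing $E^{n+1}=E\cdot E^n$, using $KEK^{-1}=q^2E$ to commute $K^{\pm1}$ past $E$, and invoking the dual $q$-integer identity; alternatively, one applies the $*$-operation to $(i)$, using $E^*=FK$ and $F^*=K^{-1}E$, to deduce $(ii)$ from $(i)$ directly.
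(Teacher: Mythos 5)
Your proof is correct and takes the same route as the paper, which simply remarks that the lemma ``can be easily proved by induction'' without writing out the details; your base case, the decomposition $EF^{n+1}-F^{n+1}E=F^{n}[K;0]+[n]F^{n-1}[K;1-n]F$, and the commutation rule $[K;l]F=F[K;l-2]$ are all exactly what is needed. One small slip to fix when you write it out: the $q$-integer identities you quote should read $q^{-1}[n]+q^{n}=[n+1]$ and $q[n]+q^{-n}=[n+1]$ (you negated the exponents of the standalone powers, and as written both identities are false); with that correction the coefficients of $K$ and $K^{-1}$ collapse to $[n+1][K;1-(n+1)]$ exactly as you predict, and part $(ii)$ follows symmetrically or via the involution as you say.
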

This lemma implies the following relations:
	\begin{align}
		\begin{split}
		E^nF^n&=\prod_{j=1}^n{(EF+[j-1][K;-j])},\\
		F^nE^n&=\prod_{j=1}^n{(EF-[j][K;j-1])},\ n\in\mathbb{N}.
		\end{split}
		\label{quantum algebra: polynomial relations}
	\end{align}

Since $\cB=\mathbb{C}[C_q,K,K^{-1}],$ every character $\chi\in\cBd$ is equal to some $\chi_{st}\in\widehat{\mathcal{B}},\;(s,t)\in\mathbb{R}\times\mathbb{R}\backslash\{0\}$ where
	\begin{align*}
		\chi_{st}(C_q)=s,\quad\chi_{st}(K)=t.
	\end{align*}
\begin{prop}
	\begin{itemize}
	\item[(i)] A character $\chi_{st}\in\widehat{\mathcal{B}}$ belongs to $\widehat{\mathcal{B}}^+$ if and only if
	\begin{align*}
		t=\pm q^{m-n} \mbox{ and }s=\frac{\pm q^{m+n+1}\pm q^{-m-n-1}}{(q-q^{-1})^2}, \textit{ where }m,n\in\mathbb{N}_0.
	\end{align*}
In particular, 
$$
\cBp=\set{\chi_{m,n,+}\mid m,n\in\dN_0}\cup\set{\chi_{m,n,-}\mid m,n\in\dN_0},
$$
where $\chi_{m,n,\pm}=\chi_{st}$ with $s,t$ from above.
	\item[(ii)] 	The partial action $\alpha=\left( \{\mathcal{D}_{n}\}_{n\in\dZ},\{\alpha_n\}_{n\in\dZ} \right)$ is given as follows:
\begin{gather*}
 \mathcal{D}_{-k}=\set{\chi_{m,n,\pm}\mid -m\leq k\leq n},\ \mbox{and}\ \chi_{m,n,\pm}^k=\chi_{m+k,n-k,\pm}^{}.
\end{gather*}
	\end{itemize}
	\label{quantum algebra: set of positive characters}
\end{prop}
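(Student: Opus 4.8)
The plan is to apply Lemma \ref{lem_poschar}, which reduces membership in $\cBp$ to checking positivity of $\chi_{st}$ on the homogeneous elements $E^nF^n$ and $F^nE^n$ for all $n\in\dN$, using the polynomial expressions \eqref{quantum algebra: polynomial relations}. Since $\cB=\dC[C_q,K,K^{-1}]$, I would first express the two relevant products entirely in terms of the commuting generators. Writing $\chi_{st}(C_q)=s$ and $\chi_{st}(K)=t$, and using $[K;l]=(q^lK-q^{-l}K^{-1})/(q-q^{-1})$, each factor $EF+[j-1][K;-j]$ and $EF-[j][K;j-1]$ becomes, after substituting $EF=C_q-(q^{-1}K+qK^{-1})/(q-q^{-1})^2$, an explicit scalar depending on $s,t,j$. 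The inequalities to be solved are then
\begin{gather*}
\chi_{st}(E^nF^n)=\prod_{j=1}^n\left(s-\tfrac{q^{-1}t+qt^{-1}}{(q-q^{-1})^2}+[j-1][K;-j]\text{-value}\right)\geq 0,\\
\chi_{st}(F^nE^n)\geq 0,\quad n\in\dN.
\end{gather*}

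Next I would simplify each factor. The key computation is that every factor collapses to a product of two linear-in-$t^{\pm1}$ terms: one expects the $j$-th factor of $\chi(E^nF^n)$ to factor as a constant times $(q^{-(j-1)}t-q^{2l+?})\cdots$, reflecting the classical shape $[l-m][l+m+1]$. Concretely, after substitution the factor $s-\frac{q^{-1}t+qt^{-1}}{(q-q^{-1})^2}+\text{(}[K;\cdot]\text{ term)}$ should become $\frac{(q^{a}t-\cdots)(\cdots)}{(q-q^{-1})^2}$ for suitable exponents. The cleanest route is to guess the answer from the known representations \eqref{eq_formulas}: there $K$ acts as $\pm q^{2m}$ and the Casimir takes the value corresponding to spin $l$, namely $s=\pm(q^{2l+1}+q^{-2l-1})/(q-q^{-1})^2$. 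So I would parametrize $t=\omega q^{2m}$ with $\omega=\pm1$ and $s=\omega(q^{2l+1}+q^{-2l-1})/(q-q^{-1})^2$, verify these satisfy all inequalities, and conversely show that the truncation forced by the sign conditions pins $s,t$ to exactly the claimed discrete set. Reindexing $m\mapsto m-n$-type shifts gives the stated form $t=\pm q^{m-n}$, $s=(\pm q^{m+n+1}\pm q^{-m-n-1})/(q-q^{-1})^2$.

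For part (ii), I would compute $\alpha_k(\chi_{m,n,\pm})$ directly from the definition \eqref{character defined} using $a_{-k}=F^k$ (resp.\ $a_k=E^k$). Because $KEK^{-1}=q^2E$ and $C_q$ is central, conjugating $K$ by $F^k$ or $E^k$ simply rescales $t$ by $q^{\mp2k}$ while fixing $s$; this is immediate from $\chi(F^kKF^{*k})/\chi(F^kF^{*k})$ and the commutation relation, and it shows $\chi_{m,n,\pm}^k$ has $K$-value shifted so that the spin labels transform as $(m,n)\mapsto(m+k,n-k)$. The domain $\cD_{-k}$ is then read off as the set of characters for which $\chi(F^kE^k)\neq0$, i.e.\ for which the relevant finite product of factors is nonzero; the sign analysis from part (i) shows this happens exactly when $-m\leq k\leq n$.

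The main obstacle I expect is the algebraic bookkeeping in part (i): showing that each factor in \eqref{quantum algebra: polynomial relations} factors cleanly and that the infinite family of inequalities for all $n\in\dN$ forces the values to terminate into the discrete two-parameter lattice, rather than producing a continuum as in the $q$-oscillator case. Unlike there, here $K$ is invertible and $t$ ranges over $\dR\setminus\{0\}$, so I must track both the $C_q$-value $s$ and the $K$-value $t$ simultaneously; the positivity constraints couple them, and the nontrivial point is that requiring $\chi(E^nF^n)\geq0$ and $\chi(F^nE^n)\geq0$ for \emph{every} $n$ (both directions) forces a factor to vanish, which quantizes $t$ to a power of $q$ and simultaneously fixes $s$. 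Verifying that the vanishing indices line up to give precisely $t=\pm q^{m-n}$ with the paired formula for $s$ is the crux, and I would handle it by isolating the first index $j$ at which a factor changes sign in each of the two products and matching the two resulting conditions.
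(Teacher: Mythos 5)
Your overall route coincides with the paper's: reduce to Lemma \ref{lem_poschar} via the products \eqref{quantum algebra: polynomial relations}, argue that the infinite family of sign conditions forces one factor in each of the two products to vanish, match the two vanishing indices to quantize $t$ and couple it to $s$, and obtain part (ii) by the direct computation with $a_k=E^k$, $a_{-k}=F^k$ and the centrality of $C_q$. The crux you single out (termination of the products) is exactly the paper's argument, carried out there by showing $-[k]\chi([K;-k-1])\to\infty$.

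There is, however, a concrete error in your setup of part (i). Lemma \ref{lem_poschar} requires $\chi(a_g^*a_g^{})\geq 0$, and since $E^*=FK$ and $F^*=K^{-1}E$, one computes $E^{*k}E^k=q^{k^2+k}\,F^kE^kK^k$ and $F^{*k}F^k$ equal to a positive multiple of $E^kF^kK^{-k}$. The correct inequalities are therefore \eqref{quantum algebra: inequality 1} and \eqref{quantum algebra: inequality 2}, which carry the extra factor $\chi(K)^{\pm k}=t^{\pm k}$; your displayed conditions $\chi_{st}(E^nF^n)\geq 0$ and $\chi_{st}(F^nE^n)\geq 0$ omit it. Since the answer contains the whole family $\chi_{m,n,-}$ with $t<0$, this factor reverses the required sign for odd $n$, so your inequalities describe a different set. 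Concretely, for $\chi=\chi_{0,1,-}$ (that is, $t=-q^{-1}$, $s=-(q^{2}+q^{-2})/(q-q^{-1})^2$) one finds $\chi(EF)=0$ and
\begin{gather*}
\chi(FE)=\chi(EF)-[1]\chi([K;0])=-\frac{t-t^{-1}}{q-q^{-1}}=-1<0,\qquad \chi(FEK)=(-1)(-q^{-1})=q^{-1}>0,
\end{gather*}
so your conditions would wrongly exclude this character from $\cBp$, even though it is the lowest-weight character of the two-dimensional representation $\pi_{-1,1/2}$; your ``verify the guessed answer'' step would thus fail for $\omega=-1$. Once the $t^{\pm k}$ factors are restored, the rest of your plan goes through as in the paper. (A second, cosmetic slip: the partial action is $\chi(a_g^*ba_g^{})/\chi(a_g^*a_g^{})$ as in \eqref{character defined}, not $\chi(F^kKF^{*k})/\chi(F^kF^{*k})$.)
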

\begin{proof}
	$(i)$ Lemma \ref{lem_poschar} and	Equations \eqref{quantum algebra: polynomial relations} imply that $\chi\in\widehat{\mathcal{B}}^+$ if and only if the following inequalities are satisfied for arbitrary $k\in\mathbb{N}$:
	\begin{align}
		\chi(E^kF^kK^{-k})&=\prod_{j=1}^{k}{\chi(EF+[j-1][K;-j])}\chi(K)^{-k}\geq 0.
		\label{quantum algebra: inequality 1}\\
		\chi(F^kE^kK^k)&=\prod_{j=1}^{k}{\chi(EF-[j][K;j-1])}\chi(K)^k\geq 0,
		\label{quantum algebra: inequality 2}
	\end{align}
	We show that there exist $m,n\in\mathbb{N}_0$ such that
	\begin{align}
		\chi(EF+[m][K;-m-1])&=0,
		\label{quantum algebra: equation 1}\\
		\chi(EF-[n+1][K;n])&=0.
		\label{quantum algebra: equation 2}
	\end{align}
	Assume the contrary, i.e. $\chi(EF+[k][K;-k-1])\neq 0$ for all $k\in\mathbb{N}_0$. Suppose $t>0$. Then, by \eqref{quantum algebra: inequality 1},
	\begin{align*}
		\chi(EF)\geq -[k]\chi([K;-k-1])=\frac{(q^{-2k-1}-q^{-1})t+(q^{2k+1}-q)t^{-1}}{(q-q^{-1})^2}.
	\end{align*}
	Such an value $\chi(EF)\in\mathbb{R}$ cannot exist, since $q^{-2k-1}\rightarrow \infty$ for $k\rightarrow \infty$ if $q\in (0,1)$, respectively $q^{2k+1}\rightarrow \infty$ for $k\rightarrow \infty$ if $q>1$. Analogously one obtains a contradiction for $t<0$, using inequalities \eqref{quantum algebra: inequality 1}. Thus, $\chi(EF+[m][K;-m-1])=0$ for some $m\in\mathbb{N}_0$. Similarly one can prove that $\chi(EF-[n+1][K;n])=0$ for some $n\in\mathbb{N}_0$, using inequalities \eqref{quantum algebra: inequality 2}. Subtracting \eqref{quantum algebra: equation 2} from \eqref{quantum algebra: equation 1} yields
	\begin{eqnarray*}
		& &[m]\chi([K;-m-1])=-[n+1]\chi([K;n])\\
		&\Longleftrightarrow &(q^{-1}-q^{-2m-1})t-(q^{2m+1}-q)t^{-1}=\\
		& &=(q^{-1}-q^{2n+1})t-(q^{-2n-1}-q)t^{-1}\\
		&\Longleftrightarrow &t^2=\frac{q^{2m+1}-q^{-2n-1}}{q^{2n+1}-q^{-2m-1}}=\frac{q^{2m}(q-q^{-2m-2n-1})}{q^{2n}(q-q^{-2m-2n-1})}\\
		&\Longleftrightarrow &t=\pm q^{m-n}.
	\end{eqnarray*}
	Hence, we obtain
	\begin{align*}
		\chi(C_q)&=[n+1]\chi([K;n])+\frac{q^{-1}\chi(K)+q\chi(K^{-1})}{(q-q^{-1})^2}\\
		&=\frac{q^{2n+1}t+q^{-2n-1}t^{-1}}{(q-q^{-1})^2}=\frac{\pm q^{m+n+1}\pm q^{-m-n-1}}{(q-q^{-1})^2}.
	\end{align*}
	$(ii)$ Observe that $\chi_{m,n,\pm}(E^kF^kK^{-k})\neq 0$ if and only if $k\leq m$ by \eqref{quantum algebra: inequality 1} and \eqref{quantum algebra: equation 1}. Analogously, $\chi_{m,n,\pm}(F^kE^kK^k)\neq 0$ if and only if $k\leq n$ by \eqref{quantum algebra: inequality 2} and \eqref{quantum algebra: equation 2}. This implies that $\chi_{m,n,\pm}\in\cD_{-k}$ if and only if $-m\leq k\leq n.$ Now suppose $k\in\{0,1,\ldots,n\}$. Since $C_q$ commutes with $E,F$, we have
	\begin{align*}
		 \chi_{m,n,\pm}^k(K)&=\frac{\chi_{m,n,\pm}(E^{\ast k}KE^k)}{\chi_{m,n,\pm}(E^{\ast k}E^k)}=\frac{\chi_{m,n,\pm}(E^{\ast k}E^kq^{2k}K)}{\chi_{m,n,\pm}(E^{\ast k}E^k)}=q^{2k}\chi_{m,n,\pm}(K),\\
		\chi_{m,n,\pm}^k(C_q)&=\frac{\chi_{m,n,\pm}(E^{* k}C_qE^k)}{\chi_{m,n,\pm}(E^{* k}E^k)}=\chi_{m,n,\pm}(C_q).
	\end{align*}
	Analogously, if $k\in\{-m,-m+1,\ldots,0\}$ we have
	\begin{align*}
		 \chi_{m,n,\pm}^k(K)&=\frac{\chi_{m,n,\pm}(F^{\ast k}KF^k)}{\chi_{m,n,\pm}(F^{\ast k}F^k)}=q^{-2k}\chi_{m,n,\pm}(K),\\
		\chi_{m,n,\pm}^k(C_q)&=\chi_{m,n,\pm}(C_q).
	\end{align*}
	Hence, if $\chi_{m,n,\pm}^k$ is defined, then $\chi_{m,n,\pm}^k(K)=\pm q^{(m+k)-(n-k)}=\chi_{m+k,n-k,\pm}(K)$ and $\chi_{m,n,\pm}^k(C_q)=\chi_{m,n,\pm}(C_q)$.
\end{proof}

In particular, the previous proposition shows that for each $\chi\in\cBp$ the stabilizer $\St\chi$ is trivial. We set
$$
\Gamma:=\set{\chi_{0,n,+}\mid n\in\mathbb{N}_0}\cup\set{\chi_{n,-}\mid n\in\mathbb{N}_0}.
$$ 
As in Section \ref{sec: q-oscillator algebra}, we conclude that $\Gamma$ is a measurable countably separated section of the partial action. Using Proposition \ref{quantum algebra: set of positive characters} we conclude that $\textnormal{Orb}\chi_{0,n,\pm}$ consists of $n+1$ elements and hence $\textnormal{Ind}\chi_{0,n,\pm}$ has dimension $n+1$ by Proposition \ref{orthonormal base of representation space}, where $\chi_{0,n,\pm}\in\Gamma$. 
Now put $l:=\frac{n}{2}$ and $\pi_{\omega,l}:=\textnormal{Ind}\chi_{0,n,\pm}$, where $\omega=\pm 1$. Let $\{e_{l+m}\}_{m=-l,-l+1,\ldots,l}$ be an orthonormal base of the representation space $\mathcal{H}_{\pi_{l,\pm}}$ of $\pi_{l,\pm}$. For notational convenience, we put $e_{l+1}:=0,$ and $e_{-l-1}:=0$.
	
Using Proposition \ref{orthonormal base of representation space}, relations \eqref{quantum algebra: polynomial relations}, Proposition \ref{quantum algebra: set of positive characters} and the facts that $\chi_{0,n,\pm}(EF)=0$, $\chi_{0,n,\pm}([K;l+m])=-\omega[l-m]$, we obtain the action of $\pi_{\omega,l}$ on the base vectors $e_{l+m}$.
\begin{align*}
\pi_{\omega,l}(K)e_{l+m}&=\chi_{0,n,\pm}^{l+m}(K)e_{l+m}=\chi_{l+m,n-l-m,\pm}(K)e_{l+m}=\omega q^{2m}e_{l+m},\\
\pi_{\omega,l}(E)e_{l+m}&=\frac{\chi_{0,n,\pm}(E^{*(l+m+1)}E^{l+m+1})}{\left(\chi_{0,n,\pm}(E^{*(l+m+1)}E^{l+m+1})\chi_{0,n,\pm}(E^{*(l+m)}E^{l+m})\right)^{1/2}}e_{l+m+1}\\
&=\left(\frac{\chi_{0,n,\pm}(E^{*(l+m+1)}E^{l+m+1})}{\chi_{0,n,\pm}(E^{*(l+m)}E^{l+m})}\right)^{1/2}e_{l+m+1}\\
&=\left( q^{(l+m+1)(l+m+2)-(l+m)(l+m+1)}\right)^{1/2}\\
&\quad \times\left(\chi_{0,n,\pm}(EF-[l+m+1][K;l+m])\chi_{0,n,\pm}(K) \right)^{1/2}e_{l+m+1}\\
&=q^{m+1}\sqrt{[l-m][l+m+1]}e_{l+m+1},\\
\pi_{\omega,l}(F)e_{l+m}&=\pi_{\omega,l}(E^*K^{-1})e_{l+m}=\omega q^{-2m}\pi_{\omega,l}(E)^*e_{l+m}\\
&=\omega q^{-m}\sqrt{[l+m][l-m+1]}e_{l+m-1}.
\end{align*}
Putting $\gee_m:=e_{l+m},\ m=-l,\dots,l,$ we see that all irreducible well-behaved \sloppy$\ast$-representations of the quantum algebra $\mathcal{U}_q(su(2))$ are unitarily equivalent to the irreducible well-behaved \sloppy$\ast$-representation $\pi_{\omega,l}$, given by the formulas \eqref{eq_formulas}, for some $\omega\in\{-1,+1\}$ and $l\in \frac{1}{2}\dN_0$. In particular, all irreducible \sloppy$\ast$-representations of $\mathcal{U}_q(su(2))$ are bounded. Summarizing the above discussion, we obtain the following

\begin{thm}
	Every irreducible well-behaved $*$-representation of $\mathcal{U}_q(su(2)),\ q\in\mathbb{R}^+\backslash\{1\},$ is induced from a one-dimensional $*$-representation.
\end{thm}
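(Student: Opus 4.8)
The plan is to read the result directly off the orbit-method machinery assembled above, so that the proof reduces to invoking Theorem \ref{thm_all_induced} together with Proposition \ref{prop_properties}. All the structural inputs are already in place: the $\mathbb{Z}$-grading makes $\mathcal{B}=\mathcal{A}_0=\mathbb{C}[C_q,K,K^{-1}]$ commutative, Proposition \ref{quantum algebra: set of positive characters} describes $\widehat{\mathcal{B}}^+$ and the partial action of $\mathbb{Z}$, and—crucially—shows that every stabilizer $\St\chi$, $\chi\in\widehat{\mathcal{B}}^+$, is trivial.

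First I would verify that $\Gamma=\set{\chi_{0,n,+}\mid n\in\mathbb{N}_0}\cup\set{\chi_{0,n,-}\mid n\in\mathbb{N}_0}$ is a section. By the formula $\chi_{m,n,\pm}^k=\chi_{m+k,n-k,\pm}$ from Proposition \ref{quantum algebra: set of positive characters}(ii), the choice $k=-m$ (admissible since $-m\le -m\le n$) slides any $\chi_{m,n,\pm}$ to $\chi_{0,m+n,\pm}\in\Gamma$, while the pair $(\pm,m+n)$—equivalently the $\alpha$-invariant value $\chi(C_q)$—is constant along each orbit; hence every orbit meets $\Gamma$ in exactly one point. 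Since $\widehat{\mathcal{B}}^+$ carries the weak topology and $\Gamma$ is countable with closed points, $\Gamma$ is measurable and countably separated, so Proposition \ref{prop_properties}(ii) applies and every irreducible well-behaved $*$-representation is associated with some $\Orb\chi$, $\chi\in\Gamma$.

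Next I would apply Theorem \ref{thm_all_induced} with $H=\St\chi=\{e\}$, so that $\mathcal{A}_H=\mathcal{A}_e=\mathcal{B}$. Then the condition \eqref{Res multiple character} forces $\mathrm{Res}_{\mathcal{B}}\rho=\rho$ to be a multiple of $\chi$, so for irreducible $\rho$ one gets $\rho=\chi$, the one-dimensional character itself. The theorem therefore gives a bijection $\chi\mapsto\Ind\chi$ between the characters in $\Gamma$ and the unitary equivalence classes of irreducible well-behaved $*$-representations; in particular every such representation is induced from the one-dimensional $*$-representation $\chi$ of $\mathcal{B}$, which is exactly the assertion of the theorem.

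Finally, to match these induced representations with the list $\pi_{\omega,l}$ of \eqref{eq_formulas}, I would substitute the data of Proposition \ref{quantum algebra: set of positive characters}, the relations \eqref{quantum algebra: polynomial relations}, and the normalizations $\chi_{0,n,\pm}(EF)=0$ and $\chi_{0,n,\pm}([K;l+m])=-\omega[l-m]$ into the explicit base formulas of Proposition \ref{orthonormal base of representation space}, with $l=n/2$ and $\gee_m:=e_{l+m}$. This last computation is the only genuinely laborious step, and the main subtlety is reconciling the normalizing factor $\chi(E^{*(l+m+1)}E^{l+m+1})^{1/2}/\chi(E^{*(l+m)}E^{l+m})^{1/2}$ with the target coefficient $q^{m+1}\sqrt{[l-m][l+m+1]}$; but because $\Orb\chi_{0,n,\pm}$ has exactly $n+1$ points the representation space is $(n+1)$-dimensional and the indexing terminates cleanly, so the identification goes through. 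Everything else is a direct invocation of the general theory.
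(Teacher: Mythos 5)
Your proposal is correct and follows essentially the same route as the paper: describe $\widehat{\mathcal{B}}^+$ and the partial action, exhibit $\Gamma=\{\chi_{0,n,\pm}\}$ as a measurable countably separated section, invoke Proposition \ref{prop_properties}(ii) and Theorem \ref{thm_all_induced} with trivial stabilizers so that every irreducible well-behaved representation is $\Ind\chi$ for a character $\chi$ of $\mathcal{B}$, and then identify the induced representations with the $\pi_{\omega,l}$ via Proposition \ref{orthonormal base of representation space}. Your explicit justification that $\Gamma$ is a section (sliding by $k=-m$ and using the orbit-invariance of $\chi(C_q)$) is a small but welcome elaboration of what the paper leaves implicit.
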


Similarly to Lemma \ref{lem_pospol_qccr} and Theorem \ref{thm_badrep_qccr} one can prove the following Lemma and Theorem.
\begin{lem}
	The polynomial $$(EF-[2][K;1])(EF-[3][K;2])\in\mathbb{C}[EF,K,K^{-1}]$$ is positive in every well-behaved $*$-representation of $\cA$ and is not of the form $\sum_{k=1}^na_k^*a_k$ for $a_k\in\cA$.
\end{lem}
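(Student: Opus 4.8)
The plan is to follow the three steps in the proof of Lemma~\ref{lem_pospol_qccr}. Write $P:=(EF-[2][K;1])(EF-[3][K;2])\in\cB=\dC[EF,K,K^{-1}]$ and, for $\chi\in\cBp$, abbreviate $\phi_j:=\chi(EF-[j][K;j-1])$, so that \eqref{quantum algebra: polynomial relations} gives $\chi(F^kE^k)=\prod_{j=1}^k\phi_j$ and, since $\chi$ is multiplicative, $\widehat{P}(\chi)=\phi_2\phi_3$.

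First I would establish the analogue of \eqref{eq_aux3}: every element of $\sum\cA^2\cap\cB$ is of the form
\[
\sum_{k\ge 0}(c_k^*c_k)\,q^{k(k+1)}K^kF^kE^k\;+\;\sum_{k\ge 1}(d_k^*d_k)\,q^{k(k-1)}K^{-k}E^kF^k,\qquad c_k,d_k\in\cB .
\]
Exactly as in the oscillator case one first decomposes a sum of squares into homogeneous components and keeps the degree-$0$ part, so that the summands may be taken of the form $E^kc$ or $F^kc$ with $c\in\cB$. Using $E^*=FK$, $F^*=K^{-1}E$ together with $KEK^{-1}=q^2E$ and $KFK^{-1}=q^{-2}F$ one computes $(E^kc)^*(E^kc)=q^{k(k+1)}(c^*c)K^kF^kE^k$ and $(F^kc)^*(F^kc)=q^{k(k-1)}(c^*c)K^{-k}E^kF^k$; finally \eqref{quantum algebra: polynomial relations} rewrites $F^kE^k$ and $E^kF^k$ as the displayed products of linear factors, which are monic of degree $k$ as polynomials in $EF$.

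The positivity in well-behaved representations is then immediate: since $P\in\cB$, Definition~\ref{definition well-behaved representation} gives $\overline{\pi(P)}=\int_{\cBp}\widehat{P}(\chi)\,dE_\pi(\chi)$, so it suffices to check $\widehat{P}(\chi)=\phi_2\phi_3\ge 0$ on $\cBp=\{\chi_{m,n,\pm}\}$. Inequality~\eqref{quantum algebra: inequality 2} reads $\chi(K)^j\prod_{i=1}^j\phi_i\ge 0$; for $n\ge 1$ one has $\phi_1\ne 0$ and $\chi(K)\phi_1>0$, and dividing the case $j=3$ by the case $j=1$ yields $\phi_2\phi_3\ge 0$. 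The boundary orbit $n=0$ (where $\phi_1=0$) is handled by the direct evaluation $\phi_2\phi_3=[m+2]\bigl([m+2]+[m+4]\bigr)\ge 0$, using that $[k]\ge 0$ for $k\ge 0$.

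Finally, suppose $P=\sum_k a_k^*a_k$ and write the right-hand side in the form above. Regarding both sides as polynomials in $EF$ with coefficients in $\dC[K,K^{-1}]$, I would first bound the grading degree: for real $K>0$ the leading ($EF$-)coefficient of the $k$-th summand equals $q^{k(k+1)}K^k\,\lvert\mathrm{lead}_{EF}c_k\rvert^2$ (respectively $q^{k(k-1)}K^{-k}\,\lvert\mathrm{lead}_{EF}d_k\rvert^2$), a non-negative function of $K$ which vanishes identically only if that coefficient is zero. Hence no cancellation between different $k$ can occur, and comparison with $\deg_{EF}P=2$ forces every term with $k\ge 3$ to vanish, while $c_1,c_2,d_1,d_2$ lie in $\dC[K,K^{-1}]$ and $\deg_{EF}c_0\le 1$. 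Next I would evaluate the identity along the orbits $\Orb\chi_{m,1,\pm}$ (where $\phi_2=0$, hence $\widehat{P}(\chi)=0$) and $\Orb\chi_{m,2,\pm}$ (where $\phi_3=0$): each summand is non-negative by \eqref{quantum algebra: inequality 1}--\eqref{quantum algebra: inequality 2}, so all summands vanish there, which forces the univariate functions $c_1,c_2,d_1,d_2$ to vanish at the infinitely many values $K=\pm q^{m-n}$ and hence identically. The identity then collapses to $P=c_0^*c_0$; but $P$ is, for fixed $K$, a monic quadratic in $EF$ with roots $[2][K;1]$ and $[3][K;2]$, and these roots are distinct for all but finitely many $K$ (the difference $[2][K;1]-[3][K;2]$ has a nonzero $K$-coefficient, proportional to $[2]q-[3]q^2=-q^4$), so $P$ takes a negative value at some character of $\cB$, while $c_0^*c_0$ is non-negative at every character. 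This contradiction proves $P\notin\sum\cA^2$. I expect this last step to be the main obstacle: because $\cB=\dC[EF,K,K^{-1}]$ is two-dimensional rather than the one-variable $\dC[N]$ of Lemma~\ref{lem_pospol_qccr}, the clean ``degree plus evaluation at the two roots'' argument must be replaced by a degree bound ruling out $K$-cancellations together with a vanishing argument along entire orbits.
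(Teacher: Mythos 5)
Your proof is correct and follows exactly the route the paper intends when it states that this lemma is proved ``similarly to Lemma~\ref{lem_pospol_qccr}'': a graded normal form for $\sum\cA^2\cap\cB$, positivity from the spectral measure being supported on $\cBp$, and a degree bound combined with evaluation at the zero set of the polynomial. The only inaccuracy is the reduction of the coefficients to single hermitian squares $c_k^*c_k$, $d_k^*d_k$ --- the one-variable fact invoked for $\dC[N]$ in Lemma~\ref{lem_pospol_qccr} fails in the two-variable algebra $\cB\cong\dC[EF,K,K^{-1}]$, so these must remain finite sums of hermitian squares in $\cB$ --- but every subsequent step (non-negativity of the leading $EF$-coefficients for $K>0$, vanishing along the orbits of $\chi_{m,1,\pm}$ and $\chi_{m,2,\pm}$, and non-negativity of a sum of squares at an arbitrary real character where $\widehat{P}<0$) goes through verbatim for sums.
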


\begin{thm}
	There exists a $*$-representation of $\mathcal{U}_q(su(2)),\ q\in\mathbb{R}^+\backslash\{1\}$, which has no well-behaved extension in a possibly larger Hilbert space.
\end{thm}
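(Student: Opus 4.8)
The plan is to transcribe the argument of Theorem~\ref{thm_badrep_qccr} to the present setting. Write $P:=(EF-[2][K;1])(EF-[3][K;2])\in\cB$. By the preceding Lemma, $P\geq 0$ in every well-behaved $*$-representation of $\cA$, while $P\notin\sum\cA^2$. The idea is to separate $P$ from the cone $\sum\cA^2$ by a Hahn--Banach functional and to show that the associated GNS representation admits no well-behaved extension: such an extension would be forced to make $\pi(P)$ both negative on the cyclic vector and positive as an operator.

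The first and essential step is to verify that $\sum\cA^2$ is closed in the finest locally convex topology on $\cA$, which I would obtain from Lemma~\ref{closcone1}. A faithful $*$-representation is available, e.g. $\pi:=\bigoplus_{\omega=\pm 1,\ l\in\frac12\dN_0}\pi_{\omega,l}$ given by \eqref{eq_formulas}; since $q\in\dR^+\setminus\{1\}$ is never a root of unity, the finite-dimensional $*$-representations separate the points of $\cU_q(su(2))$, so $\pi$ is faithful. For the exhaustion I would use the PBW basis $\{E^aK^bF^c\mid a,c\in\dN_0,\ b\in\dZ\}$ and set $E_n:=\mathrm{Lin}\{E^aK^bF^c\mid a+c\leq n,\ |b|\leq n\}$, which are finite-dimensional with $\cA=\bigcup_n E_n$. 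It then remains to produce, for each $n$, a number $k_n$ such that every $x\in\sum\cA^2\cap E_n$ can be written as $\sum_j a_j^*a_j$ with all $a_j\in E_{k_n}$.

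Establishing this degree bound is the main obstacle. I would argue by leading symbols. Filter $\cA$ by the total degree $N=a+c$ in $E,F$ (with $K^{\pm1}$ of degree $0$), refined by the weight grading $E\in\cA_1,\ F\in\cA_{-1},\ K^{\pm1}\in\cA_0$. The involution preserves $N$ (indeed $E^*=FK$, $F^*=K^{-1}E$, $K^*=K$), and the associated graded ring is a domain, so the top symbol of a Hermitian square $a^*a$ equals the square of the top symbol of $a$ and is nonzero. Since the graded ring again carries a faithful $*$-representation, a sum of nonzero Hermitian squares in it is nonzero; hence in $x=\sum_j a_j^*a_j$ the top symbols cannot cancel, and the top degree of $x$ equals $2\max_j\deg a_j$, bounding every $a_j$. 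A parallel analysis on the extreme $K$-exponents, using that $K$ and $C_q$ are self-adjoint so that $*$ fixes their powers inside $\cB=\dC[C_q,K,K^{-1}]$, controls the powers of $K$ occurring in the $a_j$ and yields an explicit $k_n$. The bookkeeping with the bilateral powers of $K$ is exactly where this case is more delicate than the $q$-oscillator case of Lemma~\ref{lem_pospol_qccr}.

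Granting that $\sum\cA^2$ is closed, the conclusion follows as in Theorem~\ref{thm_badrep_qccr}. By Hahn--Banach there is a linear functional $\varphi:\cA\to\dC$ with $\varphi(\sum\cA^2)\geq 0$ and $\varphi(P)<0$; let $(\pi_\varphi,\mathcal{H}_\varphi,\xi_\varphi)$ be its GNS construction. If $\pi_\varphi$ had a well-behaved extension $\pi$ in some larger Hilbert space, then $\langle\pi(P)\xi_\varphi,\xi_\varphi\rangle=\langle\pi_\varphi(P)\xi_\varphi,\xi_\varphi\rangle=\varphi(P)<0$, contradicting $\pi(P)\geq 0$ from the preceding Lemma. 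Hence $\pi_\varphi$ is the desired $*$-representation with no well-behaved extension.
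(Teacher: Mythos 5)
Your proposal is correct and follows exactly the route the paper intends: the paper proves this theorem only by declaring it ``similar to Lemma~\ref{lem_pospol_qccr} and Theorem~\ref{thm_badrep_qccr}'', i.e.\ separate the positive-but-not-sum-of-squares element $P$ from the closed cone $\sum\cA^2$ by Hahn--Banach and derive a contradiction from any well-behaved extension of the resulting GNS representation. You in fact supply more detail than the paper does (the faithful representation, the PBW exhaustion, and the leading-symbol degree bound needed for Lemma~\ref{closcone1}), and these details are plausible as sketched.
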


Let $\Rep\cA$ denote the category of well-behaved non-degenerate representations of $\cA.$ Then $\cA,$ considered with $\Rep\cA$ and generators $E,F,K,K^{-1},$ has a $C^*$-envelope $\gA$ in the sense of Definition \ref{defn_envelope}. As in Section \ref{sec: q-oscillator algebra}, let $(C_0(\cBp),\dZ,\beta)$ be the $C^*$-p.d.s. dual to $(\cBp,\dZ,\alpha).$ The description of the p.d.s. $(\cBp,\dZ,\alpha)$ in Proposition \ref{quantum algebra: set of positive characters} implies that the $C^*$-p.d.s. $(C_0(\cBp),\dZ,\beta)$ is defined by the partial automorphism $\Theta=(\theta,I,J),$ where $I=I_{-1},\ J=I_1,\ (\theta(f))(t)=(\beta_1(f))(t),\ f\in I_{-1}.$

The proof of the following theorem is completely analogous to the proof of Theorem \ref{thm_envelope_qccr}.
\begin{thm}\label{thm_envelope_uqsu2}
Consider the $q$-deformed enveloping algebra $\cA=\cU_q(su(2))$ with generators $E,F,K,K^{-1}$ and the category of well-behaved representations $\Rep\cA.$ Then the covariance algebra $\gA:=C^*(C_0(\cBp),\Theta)$ is a $C^*$-envelope of $\cA.$
\end{thm}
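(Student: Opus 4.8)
The plan is to run the argument of Theorem \ref{thm_envelope_qccr} with the single affiliated element $A$ replaced by four affiliated elements $A_E,A_F,A_K,A_{K^{-1}}\,\eta\,\gA$, one for each generator, and with the pair $a^{*}a,aa^{*}$ replaced by $E^{*}E,EE^{*}$. Write $\gA=C^{*}(C_0(\cBp),\Theta)=C_0(\cBp)\times_\beta\dZ$ and let $\gA_0$ be the linear hull of $\set{f\otimes k\mid k\in\dZ,\ \supp f\subseteq\cD_k\ \text{compact}}$, which is dense in $\gA$. As before denote by $\widehat{b}$ the Gelfand transform of $b\in\cB$; since $\cB=\dC[C_q,K,K^{-1}]$ and the value of $K$ is unbounded on $\cBp$, the functions $\widehat{K},\widehat{K^{-1}}$ are continuous but unbounded. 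On $\gA_0$ I would set
$$
A_K(f\otimes k)=\widehat{K}\cdot f\otimes k,\qquad A_{K^{-1}}(f\otimes k)=\widehat{K^{-1}}\cdot f\otimes k,
$$
$$
A_E(f\otimes k)=\widehat{EE^{*}}^{1/2}\cdot\theta(f)\otimes(k+1),
$$
where $\theta=\beta_1$, and define $A_F:=A_E^{*}A_{K^{-1}}$ so as to encode the defining relation $E^{*}=FK$ (equivalently $F=E^{*}K^{-1}$). Here $\widehat{E^{*}E},\widehat{EE^{*}}$ are the Gelfand transforms of $E^{*}E,EE^{*}\in\cB$, which are nonnegative on $\cBp$ by definition.

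First I would check, exactly as in Theorem \ref{thm_envelope_qccr}, that $A_E$ is closable and that $A_E^{*}A_E$ acts on $\gA_0$ as multiplication by $\widehat{E^{*}E}$, so that the range of $\Un{}+A_E^{*}A_E$ is dense in $\gA_0\subseteq\gA$ and hence $A_E\,\eta\,\gA$; by \cite[Theorem 1.4]{wor1} then also $A_E^{*}\,\eta\,\gA$. The elements $A_K,A_{K^{-1}}$ are the normal affiliated elements attached to $\widehat{K},\widehat{K^{-1}}$, and $A_F$ is affiliated as a product of affiliated elements. A routine computation with \eqref{quantum algebra: polynomial relations} and the definition of $\beta$ shows that the operators obtained by extending any representation to $A_E,A_F,A_K,A_{K^{-1}}$ satisfy the defining relations of $\cU_q(su(2))$, which makes the two constructions below well defined.

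For the equivalence of categories I would argue in both directions and compare formulas. By \cite[Theorem 5.6]{Ex} every $*$-representation of $\gA$ is a covariant representation $\pi\times u$ of $(C_0(\cBp),\Theta)$ with $\pi(f)=\int_\cBp f\,dE_\pi$ and $u$ a partial isometry whose initial and final spaces are $\overline{\pi(I)\cH_\pi}$ and $\overline{\pi(J)\cH_\pi}$; extending $\pi\times u$ to the affiliated elements yields
$$
(\pi\times u)(A_E)=u\int\widehat{E^{*}E}^{1/2}\,dE_\pi,\qquad(\pi\times u)(A_K)=\int\widehat{K}\,dE_\pi,
$$
and analogous formulas for $A_F,A_{K^{-1}}$, verified on the core $\pi(\gA_0)\cH_\pi$ as in \eqref{eq_aux4}. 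Conversely, given a well-behaved $\rho\in\Rep\cA$ with spectral measure $E_\rho$ on $\cBp$, I would take the polar decomposition $\overline{\rho(E)}=u_1c_1$; from $c_1^{2}=\overline{\rho(E^{*}E)}=\int\widehat{E^{*}E}\,dE_\rho$ one gets $c_1=\int\widehat{E^{*}E}^{1/2}\,dE_\rho$, whence
$$
\overline{\rho(E)}=u_1\int\widehat{E^{*}E}^{1/2}\,dE_\rho,\qquad\overline{\rho(E^{*})}=\int\widehat{E^{*}E}^{1/2}\,dE_\rho\cdot u_1^{*}.
$$
Applying the well-behaved relation \eqref{eq_defn_well_beh_u} to $f\in C_0(\cD_{-1})$ gives $u_1\rho(f)u_1^{*}=\rho(\theta(f))$, so that $(\Res_\cB\rho\times u_1)$ is a covariant representation of $(C_0(\cBp),\Theta)$. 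Comparing the two sets of formulas then establishes the correspondence \eqref{eq_correspondence_A_A} and that $\rho\mapsto\pi$ is an equivalence functor between $\Rep\cA$ and $\Rep\gA$.

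The step I expect to be the main obstacle is matching the initial and final spaces of $u_1$ to $E_\rho(\cD_{-1})\cH$ and $E_\rho(\cD_1)\cH$. In the $q$-oscillator the element $aa^{*}=\Un{}+qa^{*}a$ is bounded below, so $J=C_0(\cBp)$ and the final space of $u_1$ is everything; here, by Proposition \ref{quantum algebra: set of positive characters}, both $\cD_{-1}=\set{\chi_{m,n,\pm}\mid n\geq 1}$ and $\cD_1=\set{\chi_{m,n,\pm}\mid m\geq 1}$ are proper subsets of $\cBp$, since $E^{*}E$ and $EE^{*}$ each vanish at some characters. One must therefore identify $\ker u_1=\ker c_1$ with the zero set $\cBp\setminus\cD_{-1}$ of $\widehat{E^{*}E}$ and $\ker u_1^{*}$ with $\cBp\setminus\cD_1$; the identity $u_1c_1^{2}u_1^{*}=\overline{\rho(EE^{*})}=\int\widehat{EE^{*}}\,dE_\rho$, the analogue of $u_1c_1^{2}u_1^{*}=\Un{}+qc_1^{2}$, is what pins the final space down to $E_\rho(\cD_1)\cH$. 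The remaining bookkeeping, forced by the larger base algebra $\cB=\dC[C_q,K,K^{-1}]$ and by the twist $A_F=A_E^{*}A_{K^{-1}}$, is purely computational and presents no real difficulty.
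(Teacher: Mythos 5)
Your proposal is correct and follows exactly the route the paper intends: the paper gives no separate argument for this theorem, stating only that the proof is ``completely analogous'' to that of Theorem \ref{thm_envelope_qccr}, and your write-up is precisely that analogy carried out, with affiliated elements $A_E,A_F,A_K,A_{K^{-1}}$ in place of $A,A^*$ and with $\widehat{E^*E},\widehat{EE^*}$ in place of $t$ and $1+qt$. You also correctly isolate the one point that genuinely differs from the $q$-oscillator case, namely that both $\cD_{-1}$ and $\cD_1$ are proper subsets of $\cBp$ so that the final space of $u_1$ must be pinned down via $u_1^{}c_1^2u_1^*=\int\widehat{EE^*}\,dE_\rho$ rather than being all of $\cH$.
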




\begin{thebibliography}{breitestes Label}
	\bibitem[CGP] {CGP}
	\textbf{M. Chaichian, H. Grosse and P.Pre\u{s}najder}.
	\textit{Unitary representations of the q-oscillator algebra};
	J.Phys. A: Math.Gen. 27 (1994) 2045-2051, UK.

	\bibitem[D]{D}
	\textbf{P. A. Dowerk}.
	\textit{Unbounded Induced $*$-Representations of $*$-Algebras};
	Diploma Thesis, University of Leipzig, 2011.

	\bibitem[Ex]{Ex}
	\textbf{R. Exel}
	\textit{Circle actions on $C^*$-algebras, partial automorphisms, and a generalized Pimsner-Voiculescu exact sequence};
	J. Funct. Anal. \textbf{122} (1994), 361--401.

	\bibitem[Ex1]{Ex1}
	\textbf{R. Exel},
	\textit{Approximately finite $C^*$-algebras and partial automorphisms}; 
	Math. Scand. \textbf{77} (1995), no. 2, 281--288.


	\bibitem[FD]{fd}
	\textbf{J.M.G. Fell and R.S. Doran}. 
	\textit{Representations of $*$-Algebras, Locally Compact Groups, and Banach $*$-algebraic Bundles};
	Academic Press, Inc., Boston, MA, 1988.

	\bibitem[Kir]{Kir}
	\textbf{A. A. Kirillov}.
	\textit{Unitary representations of nilpotent Lie groups};
	Uspehi Mat. Nauk \textbf{17} 1962, no. 4 (106), 57--110. 

	\bibitem[Lan]{Lan}
	\textbf{E. C. Lance}.
	\textit{Hilbert $C^*$-modules. A toolkit for operator algebraists};
	LMS Lecture Note Series \textbf{210}. Cambridge University Press, Cambridge, 1995.

	\bibitem[McC]{McC}
	\textbf{K. McClanahan}.
	\textit{$K$-theory for partial crossed products by discrete groups};
	J. Funct. Anal. \textbf{130} (1995), no. 1, 77--117.

	\bibitem[OS]{OS}
	\textbf{V. Ostrovsky\v{\i} and Y. Samo\v{\i}lenko}.
	\textit{Introduction to the Theory of Representations of Finitely Presented $\ast$-Algebras. I.Representations by bounded operators};
	Harwood Academic, 2000.

	\bibitem[Pd]{Pd}
	\textbf{P. Podle\'s}.
	\textit{Quantum spheres};
	Lett. Math. Phys. \textbf{14} (1987), 193--202.

	\bibitem[R]{rief} 
	\textbf{M.A. Rieffel},
	\textit{Induced Representations of $C^*$-Algebras,} 
	Adv. Math., \textbf{13} (1974), pp. 176--257.
	
	\bibitem[SS]{SS}
	\textbf{Y. Savchuk and K. Schm\"udgen}.
	\textit{Unbounded induced representations of $*$-algebras}; Algebr. Represent. Theory, DOI: 10.1007/s10468-011-9310-6.

	\bibitem[S1]{s3}
	\textbf{K. Schm\"udgen}.
	\textit{Positive cones in enveloping algebras};
	Rep. Math. Phys. \textbf{14} (1978), no. 3, 385--404. 

	\bibitem[S2]{s2}
	\textbf{K. Schm\"udgen}.
	\textit{Graded and filtrated topological $*$-algebras II. The closure of the positive cone};
	Rev. Roum. Math. Pures et Appl. {\bf 29} (1984),  89--96.
	
	\bibitem[S3]{s}
	\textbf{K. Schm\"udgen}.
	\textit{Unbounded operator algebras and representation theory};
	Birkh\"auser Verlag, Basel, 1990.

	\bibitem[VS]{vaksman} 
	\textbf{L. L. Vaksman and Y. S. Soibelman}.
	\textit{An algebra of functions on the quantum group ${\rm SU}(2)$};
	translation in Funct. Anal. Appl. \textbf{22} (1988), no. 3, 170--181.

	\bibitem[Wor1]{wor1}
	\textbf{S. L. Woronowicz}.
	\textit{Unbounded elements affiliated to $C^*$-algebras};
	Comm. Math. Phys. \textbf{136} (1991), 399--432.

	\bibitem[Wor2]{wor2}
	\textbf{S. L. Woronowicz}.
	\textit{$C^*$-algebras generated by unbounded elements and non-compact quantum groups};
	 Rev. Math. Phys. \textbf{7} (1995), no. 3, 481--521.
	

\end{thebibliography}
\end{document}